\newcommand{\eqrel}[2][default]{\mathrel{\eqmakebox[#1]{#2}}}
\newtheorem{theorem}{Theorem}[section]
\newtheorem{lemma}[theorem]{Lemma}
\newtheorem{proposition}[theorem]{Proposition}
\theoremstyle{definition}
\newtheorem{definition}[theorem]{Definition}
\newtheorem{notation}[theorem]{Notation}
\newtheorem{remark}[theorem]{Remark}
\numberwithin{equation}{section}
\def\namedlabel#1#2{\begingroup
	#2
    \def\@currentlabel{#2}
    \label{#1}\endgroup
}
\newcommand{\Z}{\mathcal Z}
\newcommand{\F}{\mathcal F}
\newcommand{\G}{\mathcal G}
\newcommand{\C}{\mathbb C}
\newcommand{\N}{\mathbb N}
\newcommand{\id}{\mathrm{id}}
\newcommand{\M}{\mathcal{M}}
\newcommand{\eps}{\epsilon}
\newcommand{\centredrelation}[2]{{}_{\phantom{#2}}#1_{#2}}
\newcommand{\capprox}[1]{\centredrelation{\approx}{#1}}
\newcommand{\completion}[2]{\overline{#1}{}^{#2}}
\title[Property $\Gamma$ and finite dimensional boundaries]{Uniform property $\Gamma$ and finite dimensional tracial boundaries}
\author{Samuel Evington}
\address{Samuel Evington, Mathematical Institute, University of M\"unster, Ein\-stein\-strasse 62, 48149 M\"unster, Germany}
\email{evington@uni-muenster.de}
\author{Christopher Schafhauser}
\address{Christopher Schafhauser, Department of  Mathematics, University of Ne\-bra\-ska--Lincoln, 1400 R St., Lincoln, NE 68588, USA}
\email{cschafhauser2@unl.edu}
\date{\today}
\subjclass[2020]{46L05}
\thanks{Research partially supported by: Deutsche Forschungsgemeinschaft (DFG, German Research Foundation) – Project-ID 427320536 – SFB 1442 (Evington); Germany's Excellence Strategy EXC 2044 390685587  Mathematics M{\"u}nster: Dynamics–Geometry–Structure (Evington);  ERC Advanced Grant 834267 - AMAREC (Evington); NSF grants DMS-2000129 and DMS-2400178 (Schafhauser)}
\begin{document}
\begin{abstract}
    We prove that a C$^*$-algebra $A$ has uniform property $\Gamma$ if the set of extremal tracial states, $\partial_e T(A)$, is a non-empty compact space of finite covering dimension and for each $\tau \in \partial_e T(A)$, the von Neumann algebra $\pi_\tau(A)''$ arising from the GNS representation has property $\Gamma$.
\end{abstract}
\maketitle

\section{Introduction}\label{sec:Intro}

There is a rich interaction between C$^*$-algebras and their enveloping von Neumann algebras---see Brown's survey \cite{Br11}, for example.  
More recently, this interaction has been used to great success in C$^*$-algebra theory, using Connes' fundamental result on the uniqueness of the separably acting injective II$_1$ factor (\cite{Co76}) to deduce structural theorems about simple nuclear C$^*$-algebras.  For instance, even specialising to C$^*$-algebras with unique trace, this idea played a crucial role in the solution to the Toms--Winter conjecture for C$^*$-algebras with unique trace (\cite{Wi12, MS12, MS14, SWW15}), the quasidiagonality theorem (\cite{TWW17}), and the AF embedding theorem (\cite{Schaf20}).

The condition of interest in this paper is Murray and von Neumann's property $\Gamma$ (\cite{MvN43}), which they used to show the hyperfinite II$_1$ factor is not isomorphic to a free group factor, giving the first example of non-isomorphic II$_1$ factors, by showing property $\Gamma$ holds for the former and fails for the latter. 
For our purposes, the most useful characterisation of property $\Gamma$ for a II$_1$ factor is the existence of an approximately central projection of trace 1/2, which is due to  Dixmier (\cite{Di69}).

For a C$^*$-algebra $A$, let $T(A)$ denote the set of tracial states on $A$, which we will always assume is non-empty and weak$^*$-compact (the later holds, for example, if $A$ is unital), and let $\partial_e T(A)$ denote the extreme points of $T(A)$.  For each $\tau \in T(A)$, there is an $L^2$-seminorm $\|a\|_{2,\tau} = \tau(a^*a)^{1/2}$ and the Gelfand--Naimark--Segal (GNS) representation $\pi_\tau:A \rightarrow \mathcal{B}(L^2(A,\tau))$. 

The associated tracial von Neumann algebra $\pi_\tau(A)''$ is a factor if and only if $\tau \in \partial_e T(A)$, and, in this case, $\pi_\tau(A)''$ has property $\Gamma$ if and only if for every finite set $\mathcal F \subseteq A$ and $\eps > 0$, there is a positive contraction $p \in A$ with
\begin{equation}\label{eqn:tau-gamma}
    |\tau(p) - 1/2| < \eps,\quad \|p - p^2\|_{2,\tau} < \eps,\quad \text{and}\quad  \max_{a \in \mathcal F} \|[a, p]\|_{2,\tau} < \eps.
\end{equation}
When $A$ has several traces, a natural version of property $\Gamma$, which could be called \emph{fibrewise property $\Gamma$}, would be asking that $\pi_\tau(A)''$ has property $\Gamma$ for every $\tau \in \partial_e T(A)$.  
However, the more useful condition is \emph{uniform property $\Gamma$}, introduced in \cite{CETWW}, which requires that $p$ in \eqref{eqn:tau-gamma} can be chosen uniformly over all traces $\tau \in \partial_e T(A)$.\footnote{For technical reasons, one should further require $|\tau(ap) - \tau(a)/2| < \eps$ for all $a \in \mathcal F$.  We do not know if this extra condition is automatic in general, but it is when $\partial_e T(A)$ is compact (\cite[Corollary~3.2]{CETW}), which is the case of interest in this paper.}

In \cite{CETWW}, uniform property $\Gamma$ was shown to hold for all \emph{$\mathcal Z$-stable} C$^*$-algebras $A$, i.e.\ when $A \cong A \otimes \Z$, where $\mathcal Z$ denotes the Jiang--Su algebra (\cite{JS99}). This observation had a crucial role in proof that simple nuclear finite $\mathcal Z$-stable C$^*$-algebras have nuclear dimension at most one (\cite{CETWW, CE}).\footnote{For an analogous result in the infinite case, see \cite[Theorem~G]{BBSTWW} and \cite[Theorem~7.1]{MS14}, noting that simple nulcear infinite $\mathcal Z$-stable $C^*$-algebras are purely infinite by Kirchberg's Dichotomy (\cite[Corollary~3.11(ii)]{BK}).}  Furthermore, building on work of Matui and Sato (\cite{MS12}), it was shown in \cite{CETW} that separable simple nuclear C$^*$-algebras with uniform property $\Gamma$ and strict comparison are $\mathcal Z$-stable.

Any simple nuclear non-elementary C$^*$-algebra has fibrewise property $\Gamma$ because injective II$_1$ factors have property $\Gamma$ by Connes' theorem (\cite{Co76}).
Hence, due to the results of \cite{CETW}, the problem of whether fibrewise property $\Gamma$ implies uniform property $\Gamma$ is of significant interest.  In the setting when $A$ is nuclear and $\partial_e T(A)$ is compact with finite covering dimension, this is true by the results of \cite{KR14,Sa12,TWW15}.  The main result of this article removes the nuclearity constraint.

\begin{theorem}
\label{thm:GammaFibres}
    Let $A$ be a $C^*$-algebras with $T(A)$ compact and non-empty.  Suppose $\partial_e T(A)$ is compact and has finite covering dimension.  Then $A$ has uniform property $\Gamma$ if and only if $\pi_\tau(A)''$ has property $\Gamma$ for all $\tau \in \partial_e T(A)$.
\end{theorem}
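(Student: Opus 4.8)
The plan is the following. The forward implication is immediate: if a positive contraction $p \in A$ satisfies \eqref{eqn:tau-gamma} simultaneously for every $\tau \in \partial_e T(A)$, then it is in particular a witness to property $\Gamma$ for each individual $\pi_\tau(A)''$. So all of the content lies in the converse, and I would organise it into three stages --- reduce to a statement about continuous W*-bundles, produce local solutions from the fibrewise hypothesis, and glue them using the covering dimension of $\partial_e T(A)$. For the reduction, write $K := \partial_e T(A)$; since $K$ is compact, the uniform tracial seminorm $\|a\|_{2,K} := \sup_{\tau \in K}\|a\|_{2,\tau}$ has an associated uniform tracial completion $\M$, which is a continuous W*-bundle over $K$ with fibre $\pi_\tau(A)''$ over each $\tau$ and centre-valued trace $E \colon \M \to C(K) = Z(\M)$ (see \cite{CETWW} and the references there). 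One checks that $A$ has uniform property $\Gamma$ if and only if $\M$ has property $\Gamma$ as a W*-bundle: the data in \eqref{eqn:tau-gamma} involve only the trace, the $2$-norm and commutators, all continuous for the relevant topologies, so a witness in $\M$ pulls back to one in $A$ by Kaplansky density in $\|\cdot\|_{2,K}$. It therefore suffices to show that a continuous W*-bundle over a finite-dimensional compact space, all of whose fibres have property $\Gamma$, has property $\Gamma$.

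\textbf{Local solutions.} Fix a finite $\F \subseteq \M$ with $1 \in \F$ and $\eps > 0$. For each $\tau \in K$, repeated application of property $\Gamma$ of $M_\tau := \pi_\tau(A)''$ inside corners (which again have property $\Gamma$) yields, for a fixed dyadic $N = 2^\ell$, pairwise orthogonal positive contractions summing approximately to $1$, each of trace approximately $1/N$ and almost commuting with $\F$ in $\|\cdot\|_{2,\tau}$; lift these to $\M$ by Kaplansky density. Since $\sigma \mapsto \sigma(x^*x)$ and $\sigma \mapsto \sigma(x)$ are continuous on $K$ for each fixed $x \in \M$, these approximate relations persist over some open $U_\tau \ni \tau$, and compactness of $K$ produces a finite subcover $U_1, \dots, U_m$. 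Thus over each $U_i$ one has a \emph{local $\Gamma$-datum}: positive contractions $a_1^{(i)}, \dots, a_N^{(i)} \in \M$ which, over every $\sigma \in U_i$, are approximately a partition of unity into $N$ projections of trace $1/N$ almost commuting with $\F$.

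\textbf{Colouring and gluing --- the crux.} Set $d := \dim K$. Using finite covering dimension, refine $\{U_i\}$ to a finite open cover $\{V_{j,k} : 0 \le j \le d\}$ with each $V_{j,k}$ inside some $U_i$ and, for fixed colour $j$, the sets $\{V_{j,k}\}_k$ pairwise disjoint; choose a subordinate partition of unity $\{h_j\}_{j=0}^d \subseteq C(K) = Z(\M)$ with $\mathrm{supp}(h_j) \subseteq \bigsqcup_k V_{j,k}$. Within a colour $j$ the disjoint local data assemble to a single colour-$j$ datum, and the task is to combine the $d+1$ colours into one global $\Gamma$-datum over all of $K$. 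The naive convex combination $\sum_j h_j a_i^{(j)}$ fails, since convex combinations of systems of orthogonal projections need not be close to such systems; instead one treats the colours one at a time. Working in the uniform tracial ultrapower of $\M$ and using that the fibrewise central sequence algebras $M_\sigma^\omega \cap M_\sigma'$ are diffuse, one conjugates the colour-$j$ system so that, over its overlap with the already-treated colours $0, \dots, j-1$, it is compatible with (e.g.\ lies in the relative commutant of, and is spectrally aligned with) the partial datum built so far, and then interpolates along the level sets of $h_j$ via functional calculus. After $d+1$ rounds --- each adding one colour, in which the blocks are disjoint --- one obtains a global system of $N$ orthogonal projections in $\M^\omega \cap \M'$ summing to $1$ with centre-valued trace $\tfrac1N 1$; for $N = 2$ this is property $\Gamma$ for $\M$.

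The main obstacle is this last stage: passing from fibrewise to uniform data is not soft, and the colour-by-colour gluing must be carried out carefully enough that the output really is (approximately) a system of orthogonal projections rather than merely a positive element with the right trace --- it is here that diffuseness of the fibrewise central sequence algebras (so that local systems may be re-aligned inside the ultrapower) and stability of property $\Gamma$ under passage to corners do the work. This is also where nuclearity is dispensed with: the earlier treatments of the nuclear case (\cite{KR14,Sa12,TWW15}) used extra regularity coming from nuclearity, whereas here the W*-bundle and tracially complete framework of \cite{CETWW,CETW}, together with the standard colouring technology for covers of finite-dimensional spaces, supplies what is needed.
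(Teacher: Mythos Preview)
Your reduction to the W$^*$-bundle setting and the construction of $(d+1)$-coloured local $\Gamma$-data are correct and match the paper (this is Lemma~\ref{lem:GammaFibresMain}). The gap is in the gluing. You propose to conjugate each new colour into alignment with the previously treated ones, citing ``diffuseness of the fibrewise central sequence algebras,'' and then to interpolate along level sets of the partition of unity. But property~$\Gamma$ gives only \emph{existence} of projection systems with prescribed traces in the central sequence algebra, not any form of uniqueness up to conjugacy in $\M^\infty\cap S'$; and even if a fibrewise conjugation were available, assembling those into a single unitary in the bundle central sequence algebra is a local-to-global problem of exactly the kind you are trying to solve. The align-then-interpolate scheme is essentially what one does in the McDuff case (where matrix units in the central sequence algebra supply conjugating unitaries), and the point of the theorem is precisely that McDuff is not assumed. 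Moreover, ``interpolation via functional calculus'' between two projection systems does not produce a projection system unless the two already agree, so the output of your scheme is not obviously close to a partition into projections.

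The paper avoids alignment entirely. One first builds (Lemma~\ref{lem:OrthogonalContractions}) mutually orthogonal positive contractions $d_0,\dots,d_m\in\M^\infty\cap S'$ with $\tau(d_c)\geq\gamma_{m,m}>0$ uniformly in $\tau$, and sets $\Phi=\sum_c d_c\Phi^{(c)}$. The orthogonality of the $d_c$ forces the coloured ranges to be orthogonal, so $\Phi$ is a single c.p.c.\ order zero map $\C^k\to\M^\infty\cap S'$ with the correct tracial distribution on minimal projections---but it is not unital, only $\tau(\Phi(1_k))\geq\alpha$ for some $\alpha>0$ depending on $m$ alone (Lemma~\ref{lem:GammaFibresAlpha}). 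The decisive step is then a maximality argument: the optimal $\alpha_0$ is attained by reindexing, and if $\alpha_0<1$ one uses the order zero functional calculus to isolate the defect under $1_{\M^\infty}-g_{0,\eps}(\Phi_0(1_k))$ and fills it with a fresh copy of the $\alpha_0$-map, producing a strictly larger $\alpha$ and hence a contradiction. Your proposal contains no analogue of either the orthogonal gluing or this bootstrapping, and those are where the actual work is done.
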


If property $\Gamma$ is replaced with McDuff's property (the existence of a unital approximately central approximate embedding of the $2 \times 2$ matrix algebra $M_2$) in both the fibrewise and uniform conditions, the theorem holds by \cite[Proposition~7.7]{KR14} and also essentially follows from the results obtained independently and contemporaneously in \cite{Sa12,TWW15}.  

Our proof of Theorem~\ref{thm:GammaFibres} is modelled on the argument in the McDuff setting carried out in \cite[Section~4]{TWW15}.  Let $A^\infty \cap A'$ denote the tracial central sequence algebra of $A$ (see Section~\ref{sec:seq-alg}).  Then the uniform McDuff property for $A$ (in the separable setting) is equivalent to the existence of a unital embedding $M_k \rightarrow A^\infty \cap A'$ for some, or equivalently any, integer $k \geq 2$.  Property $\Gamma$ is analogously characterised by the existence of unital embeddings $\mathbb C^k \rightarrow A^\infty \cap A'$ with prescribed tracial behaviour.  The extra control on the traces is not needed in the McDuff setting due to the uniqueness of the trace on $M_k$, and explicitly controlling the tracial behaviour of the maps throughout the argument is where the new difficulties lie.

It will be technically convenient to prove Theorem~\ref{thm:GammaFibres} in the slightly different (and somewhat more general) setting of W$^*$-bundles, introduced by Ozawa in \cite{Oz13}.  Since uniform property $\Gamma$ is an approximation property in the uniform 2-seminorm
\begin{equation}
    \|a\|_{2, T(A)} = \sup_{\tau \in T(A)} \tau(a^*a)^{1/2}, \qquad a \in A,
\end{equation}
it is natural to replace $A$ with it's \emph{uniform tracial completion} $\completion{A}{T(A)}$, obtained by adding a limit point to every $\|\cdot\|$-bounded $\|\cdot\|_{2, T(A)}$-Cauchy sequence in $A$ (and quotienting by $\|\cdot\|_{2, T(A)}$-null elements).  Ozawa showed in \cite{Oz13} that $\completion{A}{T(A)}$ always carries the structure of a C$^*$-algebra---in fact, these form the prototypical examples of \emph{tracially complete $C^*$-algebras}, which were recently introduced and studied  systematically in \cite{TraciallyComplete}.

When $A$ is a C$^*$-algebra such that $T(A)$ is compact and non-empty and $\partial_e T(A)$ is compact, Ozawa showed in \cite{Oz13} that the centre of $\mathcal M = \completion{A}{T(A)}$ has spectrum $K = \partial_e T(A)$ and the natural inclusion $C(K) \rightarrow \mathcal M$ admits a faithful tracial conditional expectation $E \colon \M \rightarrow C(K)$.  
Further, 
\begin{equation}
    \|a\|_{2, T(A)} = \|E(a^*a)\|^{1/2}, \qquad a \in A,
\end{equation}
and hence, by the definition of $\M = \completion{A}{T(A)}$, the unit ball of $\mathcal M$ is complete in the norm $\|b\|_{2, \rm u} = \|E(b^*b)\|^{1/2}$.  Axiomatising this structure of the triple $(\mathcal M, K, E)$ leads to Ozawa's notion of W$^*$-bundles (see Section~\ref{sec:bundles}).  Loosely speaking, $\mathcal M$ can be viewed as the continuous sections of a topological bundle over $K$ with tracial von Neumann algebra fibres.  The following is a W$^*$-bundle analogue of Theorem~\ref{thm:GammaFibres}.

\begin{theorem}
\label{thm:GammaFibresBundles}
Let $\M$ be a $W^*$-bundle over a finite dimensional compact Hausdorff space such that every fibre of $\M$ is a {\rm II}$_1$ factor.  Then $\M$ has property $\Gamma$ if and only if every fibre of $\M$ has property $\Gamma$.
\end{theorem}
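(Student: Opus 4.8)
The plan is to dispatch one implication immediately and prove the converse by a local-to-global construction over the base, patterned on the McDuff-case argument of \cite[Section~4]{TWW15}. The easy implication: if $\M$ has property $\Gamma$ then, given $x\in K$, a finite $G\subseteq\M_x$ and $\eps>0$, lift $G$ to a finite $F\subseteq\M$ and apply property $\Gamma$ to obtain a projection $p\in\M$ with $\max_{a\in F}\|[a,p]\|_{2,\mathrm u}<\eps$ and $\|E(p)-\tfrac12 1\|<\eps$. Pushing $p$ through the quotient $\M\to\M_x$ (which does not increase the $2$-seminorm and sends $E$ to $\tau_x$) yields an approximately central projection of trace within $\eps$ of $1/2$, so the {\rm II}$_1$ factor $\M_x$ has property $\Gamma$.

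For the converse I would work with the characterisation that $\M$ has property $\Gamma$ precisely when, for every finite $F\subseteq\M$ and $\eps>0$, there is a projection $p\in\M$ with $\max_{a\in F}\|[a,p]\|_{2,\mathrm u}<\eps$ and $\|E(p)-\tfrac12 1\|<\eps$ --- equivalently, a projection in the central sequence algebra $\M^\infty\cap\M'$ whose conditional expectation is the constant function $1/2$ (a unital copy of $\C^2$ splitting $E^\infty$ evenly); moving between approximate and genuine projections uses completeness of the unit ball of $\M$ in $\|\cdot\|_{2,\mathrm u}$ together with functional calculus, and the extra tracial condition of the footnote is automatic since the base is compact. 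Now fix $F$ and $\eps$. For each $x\in K$, property $\Gamma$ of $\M_x$, applied to the image of $F$, gives a positive contraction $\bar a\in\M_x$ that is $\eps$-close in $\|\cdot\|_{2,\tau_x}$ to a central projection of trace $1/2$; lift $\bar a$ to a positive contraction $a\in\M$. Since $y\mapsto\|(a-a^2)(y)\|_{2,\tau_y}$, $y\mapsto\|[a,b](y)\|_{2,\tau_y}$ for $b\in F$, and $y\mapsto E(a)(y)$ are continuous on $K$, the estimates valid at $x$ persist on an open $U_x\ni x$, and by compactness finitely many $(U_1,a_1),\dots,(U_n,a_n)$ cover $K$.

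Since $d\coloneqq\dim K<\infty$, refine the cover so that its index set splits into $d+1$ colours whose members are pairwise disjoint within each colour; within one colour the disjointness lets the local solutions patch with no interpolation into a positive contraction $a^{(c)}\in\M$ that is $\eps$-close to a central projection of $E$-value $1/2$ on a closed set $Z_c$, with $Z_0\cup\dots\cup Z_d=K$. I would then fuse $a^{(0)},\dots,a^{(d)}$ one colour at a time by the following move: given approximate central projections $p$ on a closed set $Z$ and $q$ on a closed set $Z'$, both of $E$-value $\approx 1/2$, set $Y=Z\cap Z'$; for each $y\in Y$ the fibre $\M_y$ is a {\rm II}$_1$ factor in which $p(y)$ and $q(y)$ are projections of nearly equal trace, hence nearly unitarily equivalent. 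Assembling the implementing unitaries into a section over a neighbourhood of $Y$, joining it to $1$ by a continuous path of unitaries via functional calculus, and blending $p$ into $q$ along this path with a suitable continuous $[0,1]$-valued cut-off on $K$, one obtains a projection over $Z\cup Z'$ which still approximately commutes with $F$ and --- because unitary conjugation preserves traces fibrewise --- still has $E$-value $\approx 1/2$. After $d$ fusions one is left with a single projection $p\in\M$ over all of $K$ with $\max_{a\in F}\|[a,p]\|_{2,\mathrm u}$ and $\|E(p)-\tfrac12 1\|$ both $O(d)\eps$; as $d$ is fixed, shrinking $\eps$ at the outset gives property $\Gamma$.

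The crux is this gluing step. One must realise the interpolating unitaries as genuine continuous sections of the W$^*$-bundle while keeping every perturbation estimate uniform over $K$, and one must run the entire construction inside relative commutants (or the sequence algebra fibred over $K$) so that these unitaries themselves approximately commute with $F$. Most importantly --- and this is exactly where the argument diverges from the McDuff case, in which the analogous datum is a unital copy of $M_2$ whose unique trace makes all tracial bookkeeping vacuous --- the constraint $E(p)\approx\tfrac12 1$ must be carried explicitly and undisturbed through every patching and every conjugation, and controlling the interplay between the ultrapower construction and the fibres of $\M$ subject to this constraint is the technical heart of the proof.
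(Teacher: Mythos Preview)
Your easy implication is fine. The hard implication has a genuine gap at the gluing step, and this gap is exactly what separates the property $\Gamma$ case from the McDuff case.

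You propose to fuse approximate central projections $p$ and $q$ over overlapping regions by finding, in each fibre over the overlap, a unitary carrying one near the other, assembling these into a continuous section, and interpolating. For the resulting projection to remain approximately central, the conjugating unitaries must themselves approximately commute with $F$; you acknowledge this and suggest running the construction ``inside relative commutants.'' But that amounts to requiring that two projections of (nearly) equal trace in the central sequence algebra of the fibre $\M_x$ be unitarily equivalent \emph{within that central sequence algebra}. For McDuff fibres this holds, since the central sequence algebra then contains a unital copy of the hyperfinite II$_1$ factor and hence has comparison of projections. For fibres with only property $\Gamma$, the central sequence algebra is guaranteed to contain diffuse abelian subalgebras but no matrix units, and comparison of projections there is not available. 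Your proposal gives no mechanism to produce the needed approximately central unitaries, and there is no reason to expect one exists in general --- this is precisely the obstruction, not a technicality.

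The paper avoids unitary equivalence entirely. Instead of projections, it works throughout with c.p.c.\ order zero maps $\C^k\to\M^\infty\cap S'$ (equivalently, $k$-tuples of orthogonal positive contractions). The coloured partition-of-unity step --- which matches your setup --- gives $m+1$ such maps summing to the identity (Lemma~\ref{lem:GammaFibresMain}). These are then glued \emph{orthogonally}, not by conjugation: one multiplies each against one of $m+1$ mutually orthogonal, approximately central positive contractions of uniformly positive trace (Lemma~\ref{lem:OrthogonalContractions}, itself extracted from another application of Lemma~\ref{lem:GammaFibresMain}), producing a single order zero map $\Phi$ with $\tau(\Phi(1_k))\geq\alpha$ for some $\alpha>0$ depending only on $m$ (Lemma~\ref{lem:GammaFibresAlpha}). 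A maximality argument then forces the optimal $\alpha$ to be $1$: if $\alpha_0<1$, the defect $1_{\M^\infty}-\Phi_0(1_k)$ together with a further application of the same lemmas yields a strictly larger $\alpha$, a contradiction. When $\alpha=1$ the order zero map is a unital $^*$-homomorphism and its values on the minimal projections of $\C^k$ are the sought central projections. Orthogonal gluing of positive contractions is elementary algebra; gluing projections by conjugation needs comparison that property $\Gamma$ alone does not supply.
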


If $A$ is a C$^*$-algebra as in Theorem~\ref{thm:GammaFibres}, the corresponding W$^*$-bundle $\M = \completion{A}{T(A)}$ will satisfy the hypotheses of
Theorem~\ref{thm:GammaFibresBundles}.  
Then property $\Gamma$ for $\M$, coming from Theorem~\ref{thm:GammaFibresBundles}, will imply uniform property $\Gamma$ for $A$, 
obtaining Theorem~\ref{thm:GammaFibres}.  
After establishing some preliminaries in Section~\ref{sec:Prelims},  the rest of the paper is essentially devoted to proving Theorem~\ref{thm:GammaFibresBundles} in Section~\ref{sec:main}. Theorem~\ref{thm:GammaFibres} is deduced from Theorem~\ref{thm:GammaFibresBundles} at the end of Section~\ref{sec:main}.

\subsection*{Acknowledgements}

This work grew out of the authors' joint work with Jos\'e Carri{\'o}n, Jorge Castillejos, Jamie Gabe, Aaron Tikuisis, and Stuart White in \cite{TraciallyComplete}, and we thank them for several discussions regarding this work and related results.  We also thank Ilijas Farah for his helpful comments on an earlier draft of this paper.  Finally, we thank the referee for their suggestions.

\section{Preliminaries}\label{sec:Prelims}

\subsection{\texorpdfstring{W$^*$}{W*}-bundles}\label{sec:bundles}

W$^*$-bundles will be central to this paper. We recall the definition and set out our notational conventions below. Our standard references for W$^*$-bundles are \cite{Oz13} and \cite{Ev18}.
\begin{definition}[{cf.\ \cite[Section 5]{Oz13}}]
    A \emph{$W^*$-bundle} consists of a unital C$^*$-algebra $\M$ together with a unital embedding of $C(K)$ into the centre of $\M$ and a conditional expectation $E\colon \M \rightarrow C(K)$ such that the following axioms hold:
    \begin{enumerate}[(i)] \label{def:WstarBundle}
        \item for any $a,b \in \M$, we have $E(ab) = E(ba)$;
        \item for any $a \in \M$, we have $E(a^*a) = 0$ implies $a = 0$;
        \item the unit ball $\{a \in \M: \|a\| \leq 1\}$ is complete with respect to the norm defined by  $\|a\|_{2,\rm u} = \|E(a^*a)\|^{1/2}$.
    \end{enumerate}
\end{definition}
We shall denote a W$^*$-bundle by a triple $(\M,K,E)$ or simply by $\M$ if $K$ and $E$ are clear from context. Every point $x \in K$ defines a trace $\tau \in T(\M)$ by $\tau(a) = E(a)(x)$ for $a \in \M$.  The map $K \rightarrow T(\M)$ thus defined is continuous with respect to the weak$^*$ topology on $T(\M)$. It will be convenient to identify points in $K$ with their induced trace. 

We write $\pi_\tau \colon \M \rightarrow \mathcal B(L^2(\M, \tau))$ for the GNS representation of $\M$ with respect to $\tau \in K$. The image $\pi_\tau(\M)$ is called the \emph{fibre} of $\M$ at $\tau \in K$. An important consequence of axiom (iii) is that $\pi_\tau(\M)'' = \pi_\tau(\M)$; see \cite[Theorem 11]{Oz13}. The trace $\tau$ induces a faithful normal trace $\bar{\tau}$ on $\pi_\tau(\M)$, so the fibres of a W$^*$-bundle are tracial von Neumann algebras.  A W$^*$-bundle is said to have \emph{factorial fibres} if $\pi_\tau(\M)$ is a factor for all $\tau \in K$. This is equivalent to saying that every $\tau \in K$ is an extreme point of $T(\M)$; see \cite[Theorem 6.7.3]{Di77}, for example.

Given a C$^*$-algebra $A$ with $T(A)$ compact and non-empty, the \emph{uniform tracial completion} of $A$ with respect to $T(A)$ is defined by 
\begin{equation}
    \completion{A}{T(A)} = \frac{\{(a_n)_{n=1}^\infty \in \ell^\infty(A): (a_n)_{n=1}^\infty \text{ is }\|\cdot\|_{2,T(A)}\text{-Cauchy}\}}{\{(a_n)_{n=1}^\infty \in \ell^\infty(A): (a_n)_{n=1}^\infty \text{ is }\|\cdot\|_{2,T(A)}\text{-null}\}},
\end{equation}
where $\ell^\infty(A)$ denotes the C$^*$-algebra of bounded sequences in $A$ and 
\begin{equation}
    \|a\|_{2,T(A)} = \sup_{\tau \in T(A)} \|a\|_{2,\tau}, \qquad a \in A.
\end{equation} 
Ozawa proved that for such a C$^*$-algebra, if the set of extreme points of $T(A)$, denoted $\partial_e T(A)$, is compact in the weak$^*$ topology, then $\completion{A}{T(A)}$ can be endowed with the structure of a W$^*$-bundle over $K = \partial_e T(A)$; see \cite[Theorem 3]{Oz13}. 

W$^*$-bundles form a special case of the more general framework of tracially complete C$^*$-algebras recently introduced in \cite{TraciallyComplete}. A \emph{tracially complete $C^*$-algebra} is a pair $(\M, X)$, where $\M$ is a unital C$^*$-algebra and $X \subseteq T(\M)$ is a compact convex set of traces, where the seminorm \begin{equation}\label{eq:trace-norm}
    \|a\|_{2,X} = \sup_{\tau \in X} \|a\|_{2,\tau}, \qquad a \in A, 
\end{equation} 
is a norm and the unit ball $\{a \in \mathcal M : \|a\| \leq 1\}$ is $\|\cdot\|_{2,X}$-complete.  More precisely, given a W$^*$-bundle $(\mathcal M, K, E)$, let $X$ be the set of all traces of the form
\begin{equation}
    \tau_\mu(a) = \int_K E(a) \, d\mu, \qquad a \in \mathcal M,
\end{equation}
where $\mu$ ranges over the space of Radon probability measures on $K$.  Then $(\M, X)$ is a tracially complete C$^*$-algebra (\cite[Proposition~3.6]{TraciallyComplete}).  
By a theorem essentially due to Ozawa in \cite{Oz13}, W$^*$-bundles with factorial fibres are precisely the factorial\footnote{We recall from \cite[Definition~3.13]{TraciallyComplete} that a tracially complete C$^*$-algebra $(\mathcal M, X)$ is \emph{factorial} 
if $X$ is a face in $T(\mathcal M)$.  This happens precisely when $\pi_\tau(\mathcal M)''$ is a factor for all $\tau \in \partial_e X$ (see \cite[Proposition~3.14]{TraciallyComplete}), and hence factoriality for tracially complete C$^*$-algebras generalises the notion of factorial fibres for W$^*$-bundles.} tracially complete C$^*$-algebras $(\M, X)$ where $X$ is a Bauer simplex; this precise statement is given as \cite[Theorem 3.37]{TraciallyComplete}.  In fact, tracially complete C$^*$-algebras were introduced to extend the thoery of W$^*$-bundles beyond the Bauer setting.  For example, if $A$ is a unital C$^*$-algebra such that $T(A)$ is compact and non-empty but $\partial_e T(A)$ is not compact, then $\completion{A}{T(A)}$ does not have a natural W$^*$-bundle structure but is still a (factorial) tracially complete C$^*$-algebra.

\subsection{Sequence algebras}\label{sec:seq-alg}

Ultrapowers of W$^*$-bundles were introduced in \cite[Section 3]{BBSTWW}. In this paper, it will be more convenient to work with the Fr{\'e}chet filter on $\N$ rather than an ultrafilter; i.e.\ we will work with classical sequential limits instead of ultralimits.

\begin{definition}\label{def:seq-algebra}
Let $(\M,K,E)$ be a W$^*$-bundle. Then 
\begin{equation}
    c_{0, \rm u}(\M) = \{(a_n)_{n=1}^\infty \in \ell^\infty(\M): \lim_{n\to\infty}\|a_n\|_{2,\rm u} = 0 \}
\end{equation} 
is an ideal of the C$^*$-algebra $\ell^\infty(\M)$ of $\|\cdot\|$-bounded sequences in $\M$, and we define $\M^\infty = \ell^\infty(\M)/c_{0,\rm u}(\M)$.
Since $\|f\|_{2,\rm u} = \|f\|$ for all $f \in C(K)$, the norm sequence algebra\footnote{It has become common to use subscripts such as $A_\infty$ for C$^*$-norm sequence algebras and superscripts such as $\mathcal M^\infty$ for (uniform) tracial sequence algebras.  Since the only C$^*$-norm sequence algebra appearing in this paper is $C(K)^\infty$ (and the C$^*$-norm on $C(K)$ agrees with the uniform trace norm $\|\cdot\|_{2, T(C(K))}$), there should be no ambiguity caused by using the notation $C(K)^\infty$ for the C$^*$-norm sequence algebra.} 
\begin{equation}
    C(K)^\infty = \frac{\ell^\infty(C(K))}{\{(f_n)_{n=1}^\infty \in \ell^\infty(C(K)): \lim_{n \rightarrow \infty} \|f_n\| = 0 \}}.
\end{equation}
unitally embeds into the centre of $\M^\infty$.
There is a conditional expectation $E^\infty \colon \M^\infty \rightarrow C(K)^\infty$, defined at the level of representative sequences by $(a_n)_{n=1}^\infty \mapsto (E(a_n))_{n=1}^\infty$. 
We write $K^\infty$ for the spectrum of the abelian C$^*$-algebra $C(K)^\infty$ and identify $C(K)^\infty \cong C(K^\infty)$.
The W$^*$-bundle $(\M^\infty, K^\infty, E^\infty)$ is called the \emph{sequence algebra} of $\M$.\footnote{The only difficult part of showing that $(M^\infty, K^\infty, E^\infty)$ is a W$^*$-bundle is proving $\|\cdot\|_{2,\rm u}$-completeness of the unit ball. This is achieved using Kirchberg's $\eps$-test; see \cite[Proposition 3.9]{BBSTWW} or \cite[Proposition 5.4]{TraciallyComplete}, for example.} 
\end{definition}

It is worth saying a few extra words about the base space $K^\infty$ of the reduced power. 
For every sequence of points $(x_n)_{n=1}^\infty$ in $K$ and every free ultrafilter $\omega$ on the natural numbers, we can define a character $x_\omega \colon C(K)^\infty \rightarrow\C$ by $(f_n)_{n=1}^\infty \mapsto \lim_{n\to\omega} f_n(x_n)$. 
Hence, $x_\omega \in K^\infty$. 
The set of all such characters recovers the norm on $C(K)^\infty$ and so defines a dense subset of $K^\infty$ by a standard application of Urysohn's lemma.  
When we view elements of $K^\infty$ as traces on $\M^\infty$, by identifying $x_\omega$ with $x_\omega \circ E^\infty$, the characters of the form $x_\omega$ correspond to limit traces in the sense of \cite[Section~1]{CETW-classification} (see also \cite[Section~1.3]{BBSTWW} for an ultrapower version). Hence, we may view $K^\infty$ as a subset of the weak$^*$-closure of the limit traces on $\mathcal M^\infty$.

We end this subsection  by reminding the reader of some common notational conventions. We identify $\M$ with the subalgebra of $\M^\infty$ coming from constant sequence in $\ell^\infty(\M)$ and write $\M^\infty \cap S'$ for the relative commutant of a subset $S \subseteq \M^\infty$.

\subsection{Tracial factorisation}
W$^*$-bundles with factorial fibres enjoy the following property known as \emph{tracial factorisation}.

\begin{proposition}\label{prop:tf-F-epsilon}
	Let $(\M,K,E)$ be a $W^*$-bundle with factorial fibres. For any finite subset of contractions $\F \subseteq \M$ and $\eps > 0$, there exist a finite subset of positive contractions $\G \subseteq \M$ and $\delta > 0$ such that for all $x \in \M$, if
\begin{equation}
	\max_{y \in \G} \|[x,y]\|_{2,\rm u} < \delta,
\end{equation}
then
\begin{equation}
	\max_{y \in \F} \|E(xy)-E(x)E(y)\| < \eps.
\end{equation} 
\end{proposition}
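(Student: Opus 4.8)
The plan is to argue by contradiction, reducing everything to a clean fact about relative commutants of subfactors inside the fibres of the sequence algebra $\M^\infty$. First I would show it suffices to prove:
\begin{quote}$(\star)$ for every $a \in \M^\infty \cap \M'$ and every $b \in \M$, one has $E^\infty(ab) = E^\infty(a)E(b)$ in $C(K)^\infty$.\end{quote}
Indeed, if the proposition fails for some finite set of contractions $\F$ and some $\eps > 0$, then for every finite set of positive contractions $\G \subseteq \M$ and every $\delta > 0$ there is a contraction $x = x_{\G,\delta} \in \M$ with $\max_{y \in \G}\|[x,y]\|_{2,\rm u} < \delta$ and $\max_{y \in \F}\|E(xy) - E(x)E(y)\| \ge \eps$. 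A standard diagonal argument over an increasing exhaustion of a $\|\cdot\|_{2,\rm u}$-dense set of positive contractions (or, in the non-separable case, an ultrapower of $\M$ over the directed set of pairs $(\G,\delta)$, or Kirchberg's $\eps$-test as used for $\M^\infty$ in \cite{BBSTWW, TraciallyComplete}) then assembles these, after passing to a subsequence on which the offending $y\in\F$ is constant, into an element $a \in \M^\infty \cap \M'$ with $\|E^\infty(ay) - E^\infty(a)E(y)\| \ge \eps$ for some fixed $y \in \F$, contradicting $(\star)$.

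For $(\star)$, put $\Psi := E^\infty(ab) - E^\infty(a)E(b) \in C(K)^\infty = C(K^\infty)$; since the limit-trace characters $x_\omega$ (for sequences $(\tau_n)$ in $K$ and free ultrafilters $\omega$) are dense in $K^\infty$, it is enough to check $\Psi(x_\omega) = 0$ for each such $x_\omega$. Writing $\phi = x_\omega \circ E^\infty$ for the associated limit trace on $\M^\infty$, one computes $\Psi(x_\omega) = \phi(ab) - \phi(a)\phi(b)$. The point is that $\phi$ restricts on $\M$ to a point-trace: its restriction to the central copy $C(K) \subseteq \M$ is a character, namely evaluation at some $\tau_0 \in K$, and since $E$ is $C(K)$-linear this forces $\phi|_\M = \mathrm{ev}_{\tau_0} \circ E = \tau_0$. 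Hence $\pi_\phi|_\M$ is the GNS representation of the point-trace $\tau_0$, so $\pi_\phi(\M)'' \cong \pi_{\tau_0}(\M)'' =: N_{\tau_0}$, which is a $\mathrm{II}_1$ factor by hypothesis. Now $a$ commutes with $\M$ in $\M^\infty$, so $\pi_\phi(a)$ commutes with $\pi_\phi(\M)$ and hence with the factor $N_{\tau_0} = \pi_\phi(\M)''$; therefore the $\overline\phi$-preserving conditional expectation of $\pi_\phi(a)$ onto $N_{\tau_0}$ lies in $Z(N_{\tau_0}) = \C 1$ and, by tracial normalisation, equals $\phi(a) 1$. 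Since $\pi_\phi(b) \in N_{\tau_0}$, applying $\overline\phi$ to $\pi_\phi(a)\pi_\phi(b)$ through this conditional expectation gives $\phi(ab) = \phi(a)\phi(b)$, i.e.\ $\Psi(x_\omega) = 0$, and $(\star)$ follows.

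The conceptual content of $(\star)$ is thus just the elementary fact that the relative commutant of a subfactor $Q \subseteq N$ has conditional-expectation image in $Z(Q) = \C 1$; the work lies in organising matters so this applies inside the fibre $N_{\tau_0}$ of $\M^\infty$ (or of an ultrapower of $\M$). Accordingly, I expect the main obstacles to be technical rather than conceptual: first, making the reduction in the opening paragraph rigorous without assuming $\M$ separable --- in particular ensuring the bad elements can be taken norm-bounded, so that their limit actually lands in the sequence algebra/ultrapower; and second, having available (from \cite{Oz13, BBSTWW, TraciallyComplete}) that the sequence algebra and the ultrapowers of a $W^*$-bundle are again $W^*$-bundles for which the identification $\phi|_\M = \tau_0$ and the factoriality of $N_{\tau_0}$ hold as used above.
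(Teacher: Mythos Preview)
Your approach is correct and shares the same conceptual core as the paper's proof: both argue by contradiction and ultimately exploit factoriality of a fibre to conclude that a positive tracial functional dominated by an extremal trace is a scalar multiple of it. The difference is in packaging. You pass to the sequence algebra $\M^\infty$ (or an ultrapower in the non-separable case), reduce to the identity $(\star)$ for $a \in \M^\infty \cap \M'$, and then argue via GNS and the conditional expectation onto the subfactor $\pi_\phi(\M)''$. The paper instead works directly with nets: it takes the offending elements $(x_\lambda)$ indexed by pairs $(\G,\delta)$, picks witnessing traces $\tau_\lambda \in K$, and passes to weak$^*$-convergent subnets of $(\tau_\lambda)$ and of the functionals $b \mapsto \tau_\lambda(x_\lambda b)$, using compactness of $K$ and of the unit ball of $\M^*$. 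The limit functional $\sigma$ is then a positive tracial functional on $\M$ dominated by the extremal limit trace $\tau$, forcing $\sigma = \sigma(1)\tau$ and yielding the contradiction. This route is more elementary---no sequence-algebra or ultrapower machinery is invoked---and handles the non-separable case seamlessly via nets, sidestepping exactly the two technical issues you flag (norm-boundedness of the bad elements, and the need for ultrapowers over directed sets together with the density of limit traces there). Your version has the compensating virtue of stating the tracial factorisation identity $E^\infty(ab) = E^\infty(a)E(b)$ for $a \in \M^\infty \cap \M'$ directly in the sequence-algebra language used elsewhere in the paper; the paper recovers this consequence separately in Lemma~\ref{lem:ReindexTracialfactorisation}. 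One small correction: the hypothesis is only that the fibres are factors, not necessarily of type II$_1$, but since your argument only uses $Z(N_{\tau_0}) = \C 1$ this does not matter.
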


The fact that W$^*$-bundles with factorial fibres have tracial factorisation is implicit in \cite{Oz13}. Essentially the same phenomenon, in the setting of C$^*$-algebras with a Bauer simplex of traces, is shown in \cite{Sa12}. 
Our proof of Proposition \ref{prop:tf-F-epsilon} is modelled on \cite[Proposition 3.1]{CETW} with nets replacing sequences.

\begin{proof}[Proof of Proposition~\ref{prop:tf-F-epsilon}]
Suppose the result doesn't hold.  Then there exist $\eps_0 > 0$, a positive contraction $y_0 \in \M$, and a net $(x_\lambda)_{\lambda \in \Lambda}$ of positive contractions in $\M$ such that 
\begin{align}\label{eq:tf-commute}
	\lim_{\lambda} \|[x_\lambda,y]\|_{2,\rm u} = 0
\end{align} 	 
for all $y \in \M$, but
\begin{align}
	\|E(x_\lambda y_0)-E(x_\lambda)E(y_0)\|  \geq \eps_0
\end{align} 
for all $\lambda \in \Lambda$.  Hence, there exists a net $(\tau_\lambda)_{\lambda \in \Lambda}$ of traces in $K$ such that
\begin{align}\label{tf-not-muliplicative}
	|\tau_\lambda(x_\lambda y_0)-\tau_\lambda(x_\lambda)\tau_\lambda(y_0)| \geq \eps_0
\end{align} 
for each $\lambda \in \Lambda$.

Since $K$ is compact, after passing to a subnet, we may assume $(\tau_\lambda)_{\lambda \in \Lambda}$ converges in the weak$^*$ topology to some $\tau \in K$. Since the unit ball of $\M^*$ is weak$^*$-compact, by passing to a subnet again, we may further assume that $(y \mapsto \tau_{\lambda}(x_{\lambda} y))_{\lambda \in \Lambda}$ converges in the weak$^*$ topology to some $\sigma \in \M^*$. 

It follows from \eqref{eq:tf-commute} that $\sigma$ is a positive tracial functional on $\M$. Moreover, for positive $y \in \M$, we have
\begin{equation}
\begin{split}
    \sigma(y) &= \lim_{\lambda} \tau_{\lambda}(y^{1/2} x_{\lambda} y^{1/2})\\
	&\leq \limsup_{\lambda} \tau_{\lambda}(y^{1/2} \|x_{\lambda}\| y^{1/2})\\
	&\leq \tau(y)
\end{split}
\end{equation}
since $x_\lambda \in \M$ is a positive contraction. 

As $\M$ has factorial fibres, $\tau$ is an extremal trace on $\M$. Since $\sigma \leq \tau$, it follows that $\sigma = \sigma(1_\mathcal M) \tau$. We conclude 
\begin{equation}
\begin{split}
	\lim_{\lambda} \tau_{\lambda}(x_{\lambda} y_0) 
	&= \sigma(y_0) \\
	&= \sigma(1_\mathcal M) \tau(y_0)  \\
	&= \lim_{\lambda} \tau_{\lambda}(x_{\lambda})\lim_{\lambda} \tau_{\lambda}(y_0) \\
	&= \lim_{\lambda} \tau_{\lambda}(x_{\lambda})\tau_{\lambda}(y_0).
\end{split}
\end{equation}
However, this contradicts \eqref{tf-not-muliplicative}.
\end{proof}

In this paper, we will make judicious use of tracial factorisation to expedite our proofs in the following way. We shall show that elements with certain properties exist in all relative commutants $\M^\infty \cap \M_0'$, where $\M_0$ is any $\|\cdot\|_{2,\rm u}$-separable subalgebra of a W$^*$-bundle $\M$.
A reindexing argument will then allow us to show that elements with the same set of properties exists in $\M^\infty \cap S'$ for any $\|\cdot\|_{2,\rm u}$-separable subalgebra $S \subseteq \M^\infty$, and moreover, the elements can be chosen such that each element $a$ satisfies $\tau(as) = \tau(a)\tau(s)$  for all $s \in S$ and $\tau \in K^\infty$.

A formal statement and proof of this fact will be presented in the following lemma. To this end, we introduce some additional terminology:
the \emph{reindexing $^*$-homomorphism} $\psi_\rho \colon \M^\infty \rightarrow \M^\infty$ associated to a strictly increasing function $\rho \colon \mathbb N \rightarrow \mathbb N$ is the unital $^*$-homomorphism defined at the level of representative sequences by $(a_n)_{n=1}^\infty \mapsto (a_{\rho(n)})_{n=1}^\infty$.\footnote{The subtlety here is that $\psi_\rho$ is well-defined. This is true for the sequence algebra as $\lim_{n\to\infty}\|a_{\rho(n)}\|_{2,\rm u} = 0$ whenever $\lim_{n\to\infty}\|a_n\|_{2,\rm u} = 0$, but it is not always true for ultrapowers. This is the reason for our choice to work with sequential limits.}

\begin{lemma}\label{lem:ReindexTracialfactorisation}
Let $(\M,K,E)$ be a $W^*$-bundle with factorial fibres. 
For a $\|\cdot\|_{2, \rm u}$-separable subset $S \subseteq \M^\infty$, there is a  $\|\cdot\|_{2,\rm u}$-separable subalgebra $\M_0 \subseteq \M$ with the following property:
for any  $\|\cdot\|_{2,\rm u}$-separable subset $T \subseteq \M^\infty \cap \M_0'$, there exists a reindexing $^*$-homomorphism $\psi_\rho \colon \M^\infty \rightarrow \M^\infty$ such that $\psi_\rho(T) \subseteq \M^\infty \cap S'$ and
\begin{equation}
\label{eq:ReindexTracialfactorisation}
 \tau(\psi_\rho(t)s)=\tau(\psi_\rho(t))\tau(s) \end{equation}
for all $s \in S$, $t\in T$, and $\tau \in K^\infty$.
\end{lemma}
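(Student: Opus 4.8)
The plan is to combine tracial factorisation (Proposition~\ref{prop:tf-F-epsilon}) with a standard reindexing/diagonalisation argument. First I would fix a countable $\|\cdot\|_{2,\rm u}$-dense subset $\{s_1, s_2, \ldots\}$ of $S$, and for each $s_j$ choose a representative sequence $(s_{j,n})_{n=1}^\infty$ in $\ell^\infty(\M)$. For each $m$, apply Proposition~\ref{prop:tf-F-epsilon} with $\F = \{s_{1,n}, \ldots, s_{m,n} : n \le m\}$ (suitably normalised to contractions) and $\eps = 1/m$ to obtain a finite set $\G_m$ of positive contractions in $\M$ and a $\delta_m > 0$. I would then let $\M_0 \subseteq \M$ be the $\|\cdot\|_{2,\rm u}$-separable (indeed, countably generated) C$^*$-subalgebra generated by $\bigcup_m \G_m$ together with $\bigcup_{j} \{s_{j,n} : n \ge 1\}$ — note that the representative sequences themselves must land in $\M_0$, not just their images, so that the reindexed copies of $T$ actually commute with the $s_j$ in $\M^\infty$.

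Next, given a $\|\cdot\|_{2,\rm u}$-separable $T \subseteq \M^\infty \cap \M_0'$, fix a countable dense subset $\{t_1, t_2, \ldots\}$ with representative sequences $(t_{i,n})_{n=1}^\infty$; by density of $\M_0$ in itself we may assume each $(t_{i,n})_n$ asymptotically commutes in $\|\cdot\|_{2,\rm u}$ with every generator of $\M_0$, in particular with every $y \in \G_m$ and (after normalising) with every $s_{j,n}$ for $n \le$ any fixed bound. Now I would build a strictly increasing $\rho \colon \N \to \N$ recursively so that, writing $u_n = t_{i,\rho(n)}$ for the relevant indices, the following hold for all $i, j \le n$: (a) $\|[s_{j,n}, t_{i,\rho(n)}]\|_{2,\rm u} < 1/n$, which forces $\psi_\rho(t_i)$ to commute with $s_j$ in $\M^\infty$, giving $\psi_\rho(T) \subseteq \M^\infty \cap S'$; and (b) $\max_{y \in \G_n} \|[t_{i,\rho(n)}, y]\|_{2,\rm u} < \delta_n$, so that tracial factorisation yields $\|E(t_{i,\rho(n)} s_{j,n}) - E(t_{i,\rho(n)}) E(s_{j,n})\| < 1/n$. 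Passing to the quotient $C(K)^\infty$ and evaluating at any character $\tau \in K^\infty$, condition (b) becomes exactly $\tau(\psi_\rho(t_i) s_j) = \tau(\psi_\rho(t_i)) \tau(s_j)$; density of $\{s_j\}$ in $S$ and $\{t_i\}$ in $T$, together with the $\|\cdot\|_{2,\rm u}$-continuity of multiplication on bounded sets, of $E^\infty$, and of evaluation at $\tau$, then extends this to all $s \in S$, $t \in T$.

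The main obstacle, and the point requiring care, is the bookkeeping in the recursive construction of $\rho$: at stage $n$ one needs a single index $\rho(n) > \rho(n-1)$ that simultaneously handles finitely many conditions (a) and (b) across all $i, j \le n$ and all $y \in \G_n$, and such an index exists precisely because each $(t_{i,\cdot})$ lies in $\M^\infty \cap \M_0'$ — so $\lim_n \|[t_{i,n}, y]\|_{2,\rm u} = 0$ for $y \in \G_n \subseteq \M_0$ and $\lim_n \|[t_{i,n}, s_{j,n}]\|_{2,\rm u} = 0$ since the representatives $(s_{j,n})_n$ were placed in $\M_0$. One should double-check that the normalisation to contractions in the applications of Proposition~\ref{prop:tf-F-epsilon} is harmless (rescaling $s_{j,n}$ by $\|s_{j,n}\|^{-1}$ and absorbing constants), and that $\psi_\rho$ is well-defined, which is exactly the point flagged in the footnote and the reason for working with sequential rather than ultrafilter limits.
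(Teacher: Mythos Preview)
Your proposal is correct and follows essentially the same strategy as the paper's proof. The paper streamlines the bookkeeping slightly by taking $\F_n = \{s_n^{(i)} : i \le n\}$ (just the diagonal terms) and enforcing $\F_n \subseteq \G_n$ and $\delta_n < 1/n$ at the outset, so that the single commutator condition $\max_{y \in \G_n}\|[t^{(j)}_{\rho(n)},y]\|_{2,\rm u} < \delta_n$ simultaneously delivers both $\psi_\rho(T) \subseteq \M^\infty \cap S'$ and the tracial factorisation estimate---this avoids your separate condition~(a) and the need to load all the representative terms $s_{j,n}$ into $\M_0$.
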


\begin{proof}
By continuity and linearity, it suffices to replace $S$ by a countable set of contractions.
Let us then enumerate $S=\{s^{(1)},s^{(2)},\dots\}$ and represent $s^{(i)}$ by the sequence of contractions $(s_n^{(i)})_{n=1}^\infty$ in $\M$.
Set $\F_n = \{s_n^{(i)}: i = 1,\ldots,n\}$ and $\eps_n = \tfrac{1}{n}$. Let $\G_n \subseteq \M$ and $\delta_n > 0$ be the finite set and tolerance corresponding to $(\F_n, \eps_n)$ according to Proposition~\ref{prop:tf-F-epsilon}. We may assume that $\F_n \subseteq \G_n$ and $\delta_n < \eps_n$. Take $\M_0$ to be the subalgebra of $\M$ generated by $\bigcup_{n \in \N} \G_n$ and note that $\M_0$ is $\|\cdot\|_{2,\rm u}$-separable.

Let $T \subseteq \M^\infty \cap \M_0'$ be $\|\cdot\|_{2,\rm u}$-separable. By continuity and linearity, it suffices to replace $T$ by a countable set of contractions. Say $T=\{t^{(1)},t^{(2)},\dots\}$ and represent $t^{(j)}$ by the sequence of contractions $(t_m^{(j)})_{m=1}^\infty$.
For each $n \in \N$, any sufficiently large $m \in \N$ will satisfy
\begin{equation}
	\max_{y \in \G_n}\|[t_{m}^{(j)}, y]\|_{2,\rm u} < \delta_n 
\end{equation}
for all $j\in\{1, \ldots, n\}$ because $T$ commutes with $\M_0$. Hence, we may inductively define a strictly increasing function $\rho\colon\N \rightarrow \N$ such that
\begin{equation}\label{eq:eta-commute}
	\max_{y \in \G_n}\|[t_{\rho(n)}^{(j)}, y]\|_{2,\rm u} < \delta_n 
\end{equation}
for all $j\in\{1, \ldots, n\}$ and $n \in \mathbb N$.  By the choice of $\mathcal G_n$ and $\delta_n$, this implies 
\begin{align}
	\sup_{\tau \in K}\big|\tau(t_{\rho(n)}^{(j)}s_{n}^{(i)})-\tau(t_{\rho(n)}^{(j)})\tau(s_{n}^{(i)})\big| &< \frac{1}{n}
\end{align}
for all $i, j \in\{1, \ldots, n\}$ and $n \in \mathbb N$.  At the level of the sequence algebra $\M^\infty$, this implies \eqref{eq:ReindexTracialfactorisation}. Since we have chosen that $\F_n \subseteq \G_n$ and $\delta_n < \eps_n =\tfrac{1}{n} $, it follows from \eqref{eq:eta-commute} that $\psi_\rho(T) \subseteq \M^\infty \cap S'$. 
\end{proof}

Note that the formulation of Lemma \ref{lem:ReindexTracialfactorisation} simplifies when $\M$ itself is $\|\cdot\|_{2,\rm u}$-separable as $\M_0$ can always be taken to be $\M$.

\subsection{Property \texorpdfstring{$\Gamma$}{gamma}}\label{subsec:property-gamma}

Uniform property $\Gamma$ for C$^*$-algebras was introduced in \cite{CETWW} and further investigated in \cite{CETW}.

\begin{definition}[{\cite[Definition 2.1]{CETWW}}]\label{def:uniform-gamma}
Let $A$ be a C$^*$-algebra with $T(A)$ non-empty and compact. Then $A$ has \emph{uniform property $\Gamma$} if for any separable subset $S \subseteq A$ and $k \in \N$, there exist projections $p_1,\ldots,p_k \in A^\infty \cap S'$ summing to $1_{A^\infty}$ such that
\begin{equation}\label{eq:tracially-divides}
  \tau(ap_j) = \frac{1}{k}\tau(a)
\end{equation}
 for all $a \in S$, $\tau \in T_\infty(A)$ and $j\in\{1,\dots,k\}$.
\end{definition}

Here, the \emph{tracial sequence algebra} $A^\infty$ is defined analogously to the sequence algebra $\mathcal M^\infty$ in Definition~\ref{def:seq-algebra}, replacing with uniform trace norm $\|\cdot\|_{2, \rm u}$ on the W$^*$-bundle $\M$ with the uniform trace seminorm $\|\cdot\|_{2, T(A)}$ as in \eqref{eq:trace-norm}.  The set $T_\infty(A) \subseteq T(A^\infty)$ is the set of limit traces, defined by on representing sequences by
\begin{equation} 
    (a_n)_{n=1}^\infty \mapsto \lim_{n \rightarrow \omega} \tau_n(a_n)
\end{equation}
for a sequence of traces $(\tau_n)_{n=1}^\infty \subseteq T(A)$ and a free ultrafilter $\omega$ on $\mathbb N$. 

The definition in \cite{CETWW} differs in two ways.
When $A$ itself is separable, it suffices to take $S = A$ by a simple reindexing argument.  Also, \cite{CETWW} works with the ultrapower $A^\omega$ in place of the sequence algebra $A^\infty$.  Both constructions lead to the same notion of property $\Gamma$; see the  discussion in \cite[Section 2]{CETW} for details.  Finally, as discussed in \cite[Section 3]{CETW}, when $T(A)$ is a Bauer simplex it suffices take $a=1_A$ in this definition by tracial factorisation.  Also, we note that the informal definition of property $\Gamma$ stated in the introduction assumed $k = 2$.  This is the same condition by a slight modification of \cite[Proposition~2.3]{CETW}.\footnote{The statement in \cite[Proposition~2.3]{CETW} assumes separability, but this is not hard to remove from the proof after replacing $A^\infty \cap A'$ with $A^\infty \cap S'$ for separable $S \subseteq A$.}  However, it is important in this paper that $k$ can be taken to be arbitrarily large when applying property $\Gamma$, and when verifying property $\Gamma$, reducing to the case $k = 2$ will not significantly simplify our proof.

Property $\Gamma$ can also be defined at the level of W$^*$-bundles with factorial fibres. 
\begin{definition}\label{def:gamma-bundle}
Let $(\M,K,E)$ be a W$^*$-bundle with factorial fibres.  We say that $(\M,K,E)$ has \emph{property $\Gamma$} if for any $\|\cdot\|_{2,\rm u}$-separable subset $S \subseteq \M$ and $k \in \N$, there exist projections $p_1,\ldots,p_k \in \M^\infty \cap S'$ summing to $1_{\M^\infty}$ such that
\begin{equation}\label{eq:tracially-divides-Wstar}
  \tau(p_j) = \frac{1}{k}
\end{equation}
 for all $\tau \in K^\infty$ and $j\in\{1,\dots,k\}$.
\end{definition}
By tracial factorisation, the $p_j$ can always be chosen such that   
\begin{equation}\label{eq:tracially-divides-Wstar-2}
  \tau(ap_j) = \frac{1}{k}\tau(a)
\end{equation}
for all $a \in S$, $\tau \in K^\infty$ and $j\in\{1,\dots,k\}$.
Hence, our definition of property $\Gamma$ for W$^*$-bundles is consistent with that of \cite[Definition~5.19]{TraciallyComplete} for tracially complete C$^*$-algebras.

Finally, we note that the two notions of property $\Gamma$ above are closely connected, which is what will allow us to deduce Theorem~\ref{thm:GammaFibres} from Theorem~\ref{thm:GammaFibresBundles}.  If $A$ is a C$^*$-algebra with $T(A)$ compact and non-empty and $K = \partial_e T(A)$ compact, then $\M=\completion{A}{T(A)}$ is a W$^*$-bundle as recalled in Section~\ref{sec:bundles}, and $A$ has uniform property $\Gamma$ if and only if $\M$ has property $\Gamma$; see \cite[Proposition~5.20]{TraciallyComplete}.

\subsection{Order zero maps and their functional calculus}\label{subsec:order-zero}

Let $A$ and $B$ be C$^*$-algebras. A \emph{completely positive and contractive} (c.p.c.) map $\phi\colon A \rightarrow B$ is said to be \emph{order zero} if it preserves orthogonality, i.e.\ if $\phi(x)\phi(y) = 0$ for all $x,y \in A_+$ satisfying $xy = 0$. 

We briefly recall the structure theorem for order zero maps and the order zero functional calculus from \cite{WZ09}, which is based on early work from \cite{W}.  Let $\phi \colon A \rightarrow B$ be a c.p.c.\ order zero map. Then there is positive contraction $h \in M({\rm C}^*(\phi(A))) \cap \phi(A)'$ and a $^*$-homomorphism $\hat \phi \colon A \rightarrow M({\rm C}^*(\phi(A))) \cap \{h\}'$ such that $\phi(a) = \hat\phi(a)h$ for all $a \in A$ (see \cite[Theorem~2.3]{WZ09}). Note that when $A$ is unital, we have $h=\phi(1_A) \in {\rm C}^*(\phi(A))$.

For a positive contraction $f \in C_0(0, 1]$, define $f(\phi) \colon A \rightarrow B$ by $f(\phi)(a) = \hat\phi(a) f(h)$. 
Since  $\hat\phi$ is a $^*$-homomorphism commuting with $h$, it is easily seen that $f(\phi)$ is a c.p.c.\ order zero map. 
This construction is known as the \emph{order zero functional calculus}.
Furthermore, we can define an induced $^*$-homomorphism $\tilde\phi\colon C_0(0,1] \otimes A \rightarrow B$ via $f \otimes a \mapsto f(\phi)(a)$ (see \cite[Corollary 3.1]{WZ09}). The original c.p.c.\ order zero map $\phi$ can be recovered from $\tilde\phi$ since $\phi(a) = \tilde\phi(\id_{(0,1]} \otimes a)$ for all $a \in A$. 

The following property of the order zero functional calculus is particularly relevant to the computations in this paper.  This has been observed before (see \cite[(2.1)]{TWW15}, for example), but we include a proof for the sake of completeness.

\begin{lemma}\label{lem:oz-projection}
    Let $A$ and $B$ be $C^*$-algebras and $\phi\colon A \rightarrow B$ be a c.p.c.\ order zero map. Let $p \in A$ be a projection. Then
    \begin{equation}
        f(\phi)(p) = f(\phi(p))
    \end{equation}
     for all positive contractions $f \in C_0(0,1]$.
\end{lemma}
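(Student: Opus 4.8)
The plan is to reduce the claim to the elementary fact that a projection behaves like a scalar under continuous functional calculus in $C_0(0,1]$, and then transport this through the induced $^*$-homomorphism $\tilde\phi \colon C_0(0,1] \otimes A \to B$ from \cite[Corollary~3.1]{WZ09}, which satisfies $\tilde\phi(\id_{(0,1]} \otimes a) = \phi(a)$ and $\tilde\phi(f \otimes a) = f(\phi)(a)$ for all $a \in A$ and all positive contractions $f \in C_0(0,1]$.

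First I would dispose of the case $p = 0$, where both sides vanish, and then, for $p \neq 0$, observe that since $p$ is a projection we have $(\id_{(0,1]} \otimes p)^n = \id_{(0,1]}^n \otimes p$ for all $n \geq 1$; hence the $C^*$-subalgebra of $C_0(0,1] \otimes A$ generated by $\id_{(0,1]} \otimes p$ is $C_0(0,1] \otimes \C p$, and the isomorphism $C_0(0,1] \otimes \C p \cong C_0(0,1]$ sending $g \otimes p \mapsto g$ carries $\id_{(0,1]} \otimes p$ to $\id_{(0,1]}$. Applying $f$ (legitimately, as $f(0) = 0$) through this isomorphism yields
\begin{equation}
    f(\id_{(0,1]} \otimes p) = f \otimes p.
\end{equation}
Since $\tilde\phi$ is a $^*$-homomorphism it intertwines continuous functional calculus, so
\begin{equation}
    f(\phi(p)) = f\big(\tilde\phi(\id_{(0,1]} \otimes p)\big) = \tilde\phi\big(f(\id_{(0,1]} \otimes p)\big) = \tilde\phi(f \otimes p) = f(\phi)(p),
\end{equation}
which is exactly the assertion. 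Alternatively, one may argue directly from the structure theorem $\phi(a) = \hat\phi(a) h$: the element $q = \hat\phi(p)$ is a projection commuting with $h$, whence $f(qh) = q\,f(h)$ --- immediate for polynomials with zero constant term using $q^n = q$ and $qh = hq$, and then by approximation --- so that $f(\phi(p)) = f(qh) = q\,f(h) = \hat\phi(p) f(h) = f(\phi)(p)$.

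There is no serious obstacle here; the only points needing a moment's care are the use of $f(0) = 0$ (so that functional calculus is available on the non-unital $C_0(0,1]$ and $f(\phi)(p)$ genuinely lands in $B$) and keeping track of whether a given computation takes place in $B$, in $C^*(\phi(A))$, or in its multiplier algebra.
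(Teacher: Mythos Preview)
Your proof is correct and takes essentially the same approach as the paper: both arguments pass through the induced $^*$-homomorphism $\tilde\phi\colon C_0(0,1]\otimes A \to B$ and rest on the observation that $(\id_{(0,1]}\otimes p)^n = \id_{(0,1]}^n\otimes p$ because $p^n=p$. The paper checks this for monomials and appeals to Stone--Weierstrass, whereas you package the same computation as the identity $f(\id_{(0,1]}\otimes p)=f\otimes p$ via the isomorphism $C_0(0,1]\otimes \C p\cong C_0(0,1]$; your alternative route through the structure decomposition $\phi=\hat\phi(\cdot)h$ is also fine.
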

\begin{proof}
    By the Stone--Weierstrass theorem, it suffices to consider the functions $f_n(t) = t^n$ for $n \geq 1$.  
    Let $\tilde\phi\colon C_0(0,1] \otimes A \rightarrow B$ be the induced $^*$-homomorphism. 
    Then, since $p^n = p$, we have
    \begin{equation}
        f_n(\phi(p)) = \tilde\phi(\id_{(0,1]} \otimes p)^n = \tilde\phi(\id_{(0,1]}^n \otimes p) = f_n(\phi)(p). \qedhere
    \end{equation}
\end{proof}

We isolate the following lemma from the proof of \cite[Lemma~4.5]{TWW15}.  We thank Allan Donsig for suggesting the short spatial proof below, which we find more intuitive than the functional calculus approach taken in \cite{TWW15}.

\begin{lemma}\label{lem:oz-leq}
   Let $A$ and $B$ be $C^*$-algebras and assume $\phi, \psi \colon A \rightarrow B$ are c.p.c.\ maps with $ \phi(a) \leq \psi(a)$ for all positive $a \in A$.  If $\psi$ is order zero, then so is $\phi$.
\end{lemma}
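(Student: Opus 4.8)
The plan is to give a direct spatial argument. Faithfully represent $B \subseteq \mathcal B(H)$ for some Hilbert space $H$, so that we may freely take square roots, adjoints, and (closures of) ranges of the positive operators $\phi(a), \psi(a) \in \mathcal B(H)$. Fix positive elements $x, y \in A$ with $xy = 0$; the aim is to show $\phi(x)\phi(y) = 0$, which, since $\phi(x)$ and $\phi(y)$ are positive, is equivalent to $\phi(x)^{1/2}\phi(y)\phi(x)^{1/2} = 0$.

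First I would exploit that $\psi$ is order zero: since $xy = 0$, we get $\psi(x)\psi(y) = 0$, and hence, as $\psi(x)$ and $\psi(y)$ are self-adjoint, $\overline{\operatorname{ran}}\,\psi(x) \perp \overline{\operatorname{ran}}\,\psi(y)$. Next, from the hypothesis $0 \le \phi(x) \le \psi(x)$ one gets $\ker \psi(x) \subseteq \ker \phi(x)$ (if $\psi(x)\xi = 0$ then $\langle \phi(x)\xi, \xi\rangle \le \langle \psi(x)\xi, \xi\rangle = 0$, forcing $\phi(x)^{1/2}\xi = 0$), and therefore $\overline{\operatorname{ran}}\,\phi(x) \subseteq \overline{\operatorname{ran}}\,\psi(x)$. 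Combining the two inclusions and using that a positive operator has the same range-closure as its square root, the range of $\phi(x)^{1/2}$ is orthogonal to the range of $\psi(y)^{1/2}$, so $\psi(y)^{1/2}\phi(x)^{1/2} = 0$ and consequently
\[
 \phi(x)^{1/2}\psi(y)\phi(x)^{1/2} = \bigl(\psi(y)^{1/2}\phi(x)^{1/2}\bigr)^*\bigl(\psi(y)^{1/2}\phi(x)^{1/2}\bigr) = 0.
\]

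Finally I would bring in the other half of the hypothesis, $0 \le \phi(y) \le \psi(y)$: conjugating by $\phi(x)^{1/2}$ preserves the order, so $0 \le \phi(x)^{1/2}\phi(y)\phi(x)^{1/2} \le \phi(x)^{1/2}\psi(y)\phi(x)^{1/2} = 0$. Hence $\phi(x)^{1/2}\phi(y)\phi(x)^{1/2} = 0$, which gives $\phi(y)^{1/2}\phi(x)^{1/2} = 0$ and therefore $\phi(x)\phi(y) = 0$, as required.

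I do not expect any serious obstacle. The argument uses only positivity of $\phi$ and $\psi$ (not even complete positivity or contractivity), the pointwise order relation, and the order zero property of $\psi$; the only care needed is with the standard Hilbert space facts that $0 \le S \le T$ implies $\ker T \subseteq \ker S$ and that $S$ and $S^{1/2}$ have the same kernel. The ``spatial'' ingredient is simply the passage to a faithful representation of $B$, which is what makes these range manipulations available and, I think, explains why this is cleaner than a functional calculus computation.
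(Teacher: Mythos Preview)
Your proof is correct and is essentially the same spatial argument as the paper's: both faithfully represent $B$ on a Hilbert space and use that $0 \le \phi(a) \le \psi(a)$ forces $\ker \psi(a) \subseteq \ker \phi(a)$ (equivalently, $\overline{\operatorname{ran}}\,\phi(a) \subseteq \overline{\operatorname{ran}}\,\psi(a)$), combined with $\psi(x)\psi(y)=0$, to conclude $\phi(x)\phi(y)=0$. The paper writes this as the single chain $\overline{\phi(y)\mathcal H} \subseteq \overline{\psi(y)\mathcal H} \subseteq \ker \psi(x) \subseteq \ker \phi(x)$, while you unpack the same inclusions through square roots and a conjugation step, but the content is identical.
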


\begin{proof}
    Fix a faithful representation $B \subseteq \mathcal B(\mathcal H)$.  If $a, b \in A$ are positive with $ab = 0$, then $\psi(a) \psi(b) = 0$.  Combining this with the inequalities  $0 \leq \phi(a) \leq \psi(a)$ and $0 \leq \phi(b) \leq \psi(b)$ yields
    \begin{equation}
        \overline{\phi(b)\mathcal H} \subseteq \overline{\psi(b)\mathcal H} \subseteq \ker \psi(a) \subseteq \ker \phi(a),
    \end{equation}
    and hence $\phi(a) \phi(b) = 0$.
\end{proof}

\section{Finite covering dimension and property \texorpdfstring{$\Gamma$}{gamma}}\label{sec:main}

We now begin our journey towards Theorem \ref{thm:GammaFibresBundles}. As the argument is fairly technical, we have broken it down into a series of lemmas, each presented in its own subsection together with some additional commentary. 
For convenience, we shall make the following global notational conventions. 

\begin{notation}
We write $e_1,\dots,e_k$ for the minimal projections of $\C^k$ and $1_k$ for the unit of $\C^k$. 
For $z_1, z_2 \in \C$ and $\eps > 0$, we write $z_1 \approx_\eps z_2$ as a shorthand for $|z_1 - z_2| \leq \eps$. 
For positive elements $x$ and $y$ of a C$^*$-algebra, we write $x \perp y$ if $xy = yx = 0$.
For future use in functional calculus, we define the continuous functions $g_{\gamma_1,\gamma_2}\in C_0(0,1]$,  where $0\leq\gamma_1<\gamma_2\leq 1$, by
\begin{equation}
\label{eq:gDef}
	g_{\gamma_1, \gamma_2}(t) = \begin{cases} 0, & 0 < t \leq \gamma_1; \\ \frac{t - \gamma_1}{\gamma_2 - \gamma_1}, & \gamma_1 < t < \gamma_2; \\ 1, & \gamma_2 \leq t \leq 1. \end{cases} 
\end{equation}
\end{notation}

Our construction makes systematic use of c.p.c.\ order zero maps $\phi\colon \C^k \rightarrow A$, where $A$ is a C$^*$-algebra of interest. 
The reader is encouraged to think of these objects as a convenient packaging for $k$ mutually orthogonal positive contractions in the C$^*$-algebra $A$, namely the elements $\phi(e_1),\ldots,\phi(e_k) \in A$. Note that $\phi$ being a $^*$-homomorphism is equivalent to these positive elements being projections.

\subsection{The core partition of unity argument}

The first step in the proof of Theorem \ref{thm:GammaFibresBundles} is to apply a standard W$^*$-bundle partition of unity results to the property that all the fibres are II$_1$ factors with property $\Gamma$.  In each fibre, we have $k$ orthogonal and approximately central projections, each of trace $1/k$, that sum to the identity.  Gluing them together over a partition of unity results in approximately central positive contractions $a_1, \ldots, a_k \in \mathcal M$, each of trace approximately $1/k$, that sum to the identify.   

Using that $m = \dim(K) < \infty$, we can chose the aforementioned partition of unity so that any point in $K$ is contained in the support of at most $m+1$ of the functions in the partition.  This allows us to decompose each $a_j$ as a sum $\sum_{c = 0}^m a_j^{(c)}$,
where each $a_j^{(c)}$ is a positive contraction and, for each $c \in\{0,\ldots,m\}$, the elements $a_1^{(c)}, \ldots, a_k^{(c)}$ are mutually orthogonal, 
providing an `$(m+1)$-coloured' version of property $\Gamma$.  

It is crucial in the rest of the argument that $m$ can be chosen uniformly over $k \in \mathbb N$ and over all tolerances used to measure approximate centrality (i.e.\ that $m$ does not depend on the separable set $S$).  This is where the finiteness of $\dim(K)$ enters the proof of Theorem~\ref{thm:GammaFibresBundles}.

\begin{lemma}[{cf.\ \cite[Lemma 4.1]{TWW15}}]
\label{lem:GammaFibresMain}
 Let $m \in \N$ and let $(\M,K,E)$ be a $W^*$-bundle whose fibres are {\rm II}$_1$ factors with property $\Gamma$ and such that $\dim(K) \leq m$.  Further, let $S \subseteq \M$ be a  $\|\cdot\|_{2,\rm u}$-separable subset. 
Then for every $k \in \N$ there exist c.p.c.\ order zero maps $\Phi^{(0)},\dots,\Phi^{(m)}\colon\C^k \to \M^\infty \cap S'$ such that
\begin{align}
\label{eq:GammaFibresMainA}
\sum_{c'=0}^m \Phi^{(c')}(1_k) &= 1_{\M^\infty}
\shortintertext{and}
\label{eq:GammaFibresMainB}
\tau(f(\Phi^{(c)}(e_j)))&=\frac1k\tau(f(\Phi^{(c)}(1_k)))
\end{align}
 for all $\tau\in K^\infty,\ c\in\{0,\dots,m\},\ j\in\{1,\dots,k\}$, and $f \in C_0(0,1]$.
\end{lemma}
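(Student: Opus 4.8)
The plan is to prove Lemma~\ref{lem:GammaFibresMain} in two stages: first produce, for each $k$, a single family of mutually orthogonal approximately central positive contractions $a_1,\dots,a_k \in \M^\infty \cap S'$ summing to $1_{\M^\infty}$ with $\tau(f(a_j)) = \tfrac1k\tau(f(a_j + \cdots))$-type trace conditions holding in the limit; then use the finiteness of $\dim K$ to split this single family into $m+1$ ``colours'' of genuinely orthogonal families. More precisely, I would work at the level of representing sequences in $\M$, so that it suffices to find, for a fixed finite $\F \subseteq S$, fixed $\eps > 0$, and fixed $k$, c.p.c.\ order zero maps $\Phi^{(0)},\dots,\Phi^{(m)} \colon \C^k \to \M$ with $\sum_{c'} \Phi^{(c')}(1_k) \approx_\eps 1_\M$, with each $\Phi^{(c)}(e_j)$ almost commuting with $\F$ in $\|\cdot\|_{2,\rm u}$, and with $\tau(f(\Phi^{(c)}(e_j))) \approx_\eps \tfrac1k \tau(f(\Phi^{(c)}(1_k)))$ for the finitely many $f$ one needs to control (a Stone--Weierstrass/compactness argument reduces the ``for all $f \in C_0(0,1]$'' in \eqref{eq:GammaFibresMainB} to finitely many polynomials, exactly as in Lemma~\ref{lem:oz-projection}); passing to the sequence algebra and invoking a diagonal argument then gives the exact statement. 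By Lemma~\ref{lem:ReindexTracialfactorisation}, once this is done for $S$ replaced by a separable subalgebra $\M_0 \subseteq \M$, a reindexing recovers the relative commutant of the original separable set $S \subseteq \M^\infty$ with the tracial factorisation \eqref{eq:ReindexTracialfactorisation} thrown in for free --- though here the $\tau(f(\cdots))$ products are not linear in the relevant element, so I expect to apply tracial factorisation to the elements $\Phi^{(c)}(e_j)$ themselves and then bootstrap to $f(\Phi^{(c)}(e_j))$ using that the fibre traces are multiplicative against the centre.

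**The fibrewise input and gluing.**

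For the first stage I would invoke property $\Gamma$ fibrewise: for each $x \in K$, the II$_1$ factor $\pi_x(\M)$ has property $\Gamma$, so (using the arbitrary-$k$ form of Dixmier's characterisation) there are mutually orthogonal projections $q_1^x,\dots,q_k^x \in \pi_x(\M)$ of trace $1/k$ summing to $1$, each almost commuting with the image of $\F$ in $\|\cdot\|_{2,\bar\tau_x}$. A standard local-to-global argument for W$^*$-bundles --- this is the content of the ``cf.\ \cite[Lemma~4.1]{TWW15}'' attribution, and is where one uses that the unit ball of $\M$ is $\|\cdot\|_{2,\rm u}$-complete and that the fibre seminorms $\|\cdot\|_{2,x}$ depend continuously on $x$ --- lets one lift these to a neighbourhood: for each $x$ there is an open $U_x \ni x$ and mutually orthogonal positive contractions $b_1,\dots,b_k \in \M$ (projections up to small error) that, for every $y \in U_x$, almost commute with $\F$ and have $\tau_y(b_j) \approx 1/k$. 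By compactness of $K$, extract a finite subcover $U_{x_1},\dots,U_{x_N}$ with associated families $b_1^{(i)},\dots,b_k^{(i)}$. Now take a partition of unity $(h_i)_{i=1}^N$ in $C(K)$ subordinate to this cover, and here is the key point: since $\dim K \le m$, by passing to a refinement we may arrange that the cover has \emph{order} at most $m+1$, i.e.\ no point of $K$ lies in more than $m+1$ of the sets $\{h_i \neq 0\}$, and moreover we may colour the index set $\{1,\dots,N\} = I_0 \sqcup \cdots \sqcup I_m$ so that within each colour class $I_c$ the supports of the $h_i$ are pairwise disjoint (this is the standard consequence of finite covering dimension, as in the nerve/colouring arguments of Kirchberg--Rørdam and Winter). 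Then set $a_j = \sum_{i=1}^N h_i\, b_j^{(i)}$ and $a_j^{(c)} = \sum_{i \in I_c} h_i\, b_j^{(i)}$, so that $a_j = \sum_{c=0}^m a_j^{(c)}$, the $a_j$ are approximately central with $\sum_j a_j \approx 1$ and $\tau(a_j) \approx 1/k$, and crucially, for each fixed colour $c$, the elements $a_1^{(c)},\dots,a_k^{(c)}$ are \emph{exactly} mutually orthogonal (for each $i \in I_c$ the $b_j^{(i)}$ are orthogonal across $j$, and for distinct $i, i' \in I_c$ the functions $h_i, h_{i'}$ have disjoint support and $C(K)$ is central). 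Mutual orthogonality of $\{a_1^{(c)},\dots,a_k^{(c)}\}$ is precisely what lets us define the c.p.c.\ order zero map $\Phi^{(c)} \colon \C^k \to \M$ by $e_j \mapsto a_j^{(c)}$.

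**Controlling the traces and the order-zero functional calculus.**

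The remaining work is to upgrade the trace estimates $\tau(a_j) \approx 1/k$ to the functional-calculus form \eqref{eq:GammaFibresMainB}, namely $\tau(f(\Phi^{(c)}(e_j))) \approx \tfrac1k \tau(f(\Phi^{(c)}(1_k)))$. By Lemma~\ref{lem:oz-projection}, if the $b_j^{(i)}$ were honest projections then $f(\Phi^{(c)}(e_j)) = f(a_j^{(c)})$ and, using the structure theorem $\Phi^{(c)}(a) = \hat\Phi^{(c)}(a) h^{(c)}$ with $h^{(c)} = \Phi^{(c)}(1_k)$, one gets $f(\Phi^{(c)}(e_j)) = \hat\Phi^{(c)}(e_j) f(h^{(c)})$, so $\sum_j f(\Phi^{(c)}(e_j)) = f(h^{(c)}) = f(\Phi^{(c)}(1_k))$; combining this with the near-equality of the traces of the summands (which holds provided the $b_j^{(i)}$ within a fixed $i$ are ``symmetric enough'' --- one should choose the fibrewise projections so that $q_j^x$ are unitarily conjugate, hence $f$ of them has equal trace, and arrange the lift to preserve this approximately) yields $\tau(f(\Phi^{(c)}(e_j))) \approx \tfrac1k\tau(f(h^{(c)}))$. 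I expect the \textbf{main obstacle} to be exactly this point: making the trace estimate \emph{exact} rather than approximate, for \emph{all} $f$ simultaneously, while the lifts $b_j^{(i)}$ are only approximately projections. The clean way around it is to do all estimates with error in the sequence algebra --- work with sequences $(b_{j,n}^{(i)})_n$ where the errors $\to 0$, apply Lemma~\ref{lem:oz-projection} to the limit order zero maps $\Phi^{(c)}$ landing in $\M^\infty$ (where $\Phi^{(c)}(e_j)$ automatically satisfy exact orthogonality and exact sums by construction), and verify \eqref{eq:GammaFibresMainB} directly in $\M^\infty$ using that each fibre trace $\tau \in K^\infty$ is a $C(K^\infty)$-valued expectation composed with evaluation, so the partition-of-unity weights factor out and one is reduced to the exact fibrewise identity $\bar\tau_x(f(q_j^x)) = \tfrac1k \bar\tau_x(f(1))$ for the honest projections $q_j^x$, which is trivially true. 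A $\|\cdot\|_{2,\rm u}$-continuity argument (order zero maps out of the finite-dimensional $\C^k$ are automatically $\|\cdot\|_{2,\rm u}$-$\|\cdot\|_{2,\rm u}$ continuous) confirms that $\Phi^{(c)}$ really does land in $\M^\infty \cap S'$, and a final appeal to Lemma~\ref{lem:ReindexTracialfactorisation} promotes $S \subseteq \M$ to an arbitrary separable $S \subseteq \M^\infty$ if needed.
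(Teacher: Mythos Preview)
Your proposal is essentially correct and follows the same strategy as the paper: use property $\Gamma$ fibrewise to obtain projections of trace $1/k$, lift to c.p.c.\ order zero maps $\phi_\tau \colon \C^k \to \M$ via projectivity of the cone, take neighbourhoods $V_\tau$ on which the estimates persist, extract an $(m+1)$-coloured refinement with subordinate partition of unity, and define $\Phi^{(c)} = \sum_{i \in I_c} h_i \phi_{\tau_i}$.

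Two points where you over-complicate matters. First, Lemma~\ref{lem:ReindexTracialfactorisation} is unnecessary here: the statement only requires $S \subseteq \M$, not $S \subseteq \M^\infty$, and there is no tracial-factorisation clause to secure, so the reindexing machinery is not invoked in the paper's proof of this lemma. Second, your ``main obstacle'' discussion of the trace identity is more tangled than it needs to be. The paper simply reduces \eqref{eq:GammaFibresMainB} to the monomials $f(t)=t^n$ for $n \leq N$ at each finite stage, and verifies $\big|\tau(\Phi^{(c)}(e_j)^n) - \tfrac1k \tau(\Phi^{(c)}(1_k)^n)\big| \leq \eps$ for $\tau \in K$ by using that at most one $h_i^{(c)}$ is supported at $\tau$, that $\tau$ is extremal so $\tau(h^n b) = \tau(h^n)\tau(b)$ for central $h$, and that $\tau(\phi_{\tau_i^{(c)}}(e_j)^n) \approx 1/k$ on $V_{\tau_i^{(c)}}$. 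Passing to the sequence algebra then gives exactness for all $n$, hence all $f \in C_0(0,1]$ by Stone--Weierstrass. Your suggestion to ``verify \eqref{eq:GammaFibresMainB} directly in $\M^\infty$ using that the partition-of-unity weights factor out'' does not quite parse as written, since the partition of unity and the lifts $\phi_{\tau_i}$ vary with the approximation level and there is no single $h_i$ surviving in $\M^\infty$; the estimate must be done at each finite stage for $\tau \in K$, as the paper does.
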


\begin{proof}
Unpacking the sequence algebra formalism, it suffices to show that for any finite set $\mathcal F \subseteq \M$, $N\in\N$, and $\eps>0$, there are c.p.c.\ order zero maps $\Phi^{(0)},\dots,\Phi^{(m)}\colon \C^k \to \M$ such that
\begin{align}
\label{eq:GammaFibresMain2}
\Big\|\sum_{c'=0}^m \Phi^{(c')}(1_k)-1_{\M}\Big\|_{2,\rm u} &\leq \eps, \\
\label{eq:GammaFibresMain1}
\big\|[\Phi^{(c)}(e_j),b]\big\|_{2,\rm u} &\leq \eps,
\shortintertext{and}
\label{eq:GammaFibresMain3}
\big|\tau(\Phi^{(c)}(e_j)^n)-\frac1k\tau(\Phi^{(c)}(1_k)^n)\big| &\leq \eps
\end{align}
for all $\tau \in K$, $c\in\{0,\dots,m\}$,  $j\in\{1,\dots,k\}$, $n\in\{1,\dots,N\}$, and $b \in \mathcal F$.

For every $\tau \in K$, since $\pi_\tau(\M)$ has property $\Gamma$, there exists a unital $^*$-homomorphism $\C^k \to \pi_\tau(\M)$ such that the image of each $e_j$ has trace $\frac1k$ and $\|\cdot\|_{2,\tau}$-approximately commutes with $\pi_\tau(\mathcal F)$.

By \cite[Theorem 4.6]{Lo93} (cf.\ \cite[Proposition~2.6]{AP}), the cone over $\C^k$ is a projective C$^*$-algebra. Combined with the structure theorem for order zero maps \cite[Corollary 3.1]{WZ09}, this implies that the constructed $^*$-homomorphism $\C^k \to \pi_\tau(\M)$ lifts to a c.p.c.\ order zero map $\phi_\tau\colon \mathbb{C}^k \rightarrow \M$. Let $\eps > 0$. By continuity, there is a neighbourhood $V_\tau$ of $\tau$ in $K$ such that for all $\sigma \in V_\tau$,
\begin{align}
\label{eq:GammaFibresMain4a}
\|\phi_\tau(1_k)-1_{\M}\|_{2,\sigma} &\leq \frac\eps{m+1}, \\
\label{eq:GammaFibresMain4}
\|[\phi_\tau(e_j),b]\|_{2,\sigma} &\leq \eps, 
\shortintertext{and}
\label{eq:GammaFibresMain5}
 \sigma(\phi_\tau(e_j)^n)&\approx_{\eps/2} 1/k
\end{align}
for all $b \in \mathcal F$, $j\in\{1, \ldots, k\}$, and $n\in\{1, \ldots, N\}$.\footnote{Recall from Section \ref{sec:bundles} that we are identifying the point $\tau \in K$ with the trace $\mathrm{eval}_\tau \circ E$, and the map $\tau \mapsto \mathrm{eval}_\tau \circ E$ is continuous with respect to the weak$^*$ topology on $T(\M)$.}

By compactness and since $\dim(K) \leq m$, we may find a finite $(m+1)$-coloured refinement of $\{V_\tau:\tau \in K\}$---that is, an open cover of $K$ consisting of open sets $U_i^{(c)}$ for $i\in\{1,\dots,l_c\}$ and $c\in\{0,\dots,m\}$ such that each $U_i^{(c)}$ is contained in $V_{\tau_i^{(c)}}$ for some $\tau_i^{(c)}\in K$, and for each $c$, the sets $U_1^{(c)},\dots,U_{l_c}^{(c)}$ are disjoint.
Let $(h_i^{(c)})_{c=0,\dots,m;\,i=1,\dots,l_c}$ be a partition of unity in $C(K) \subseteq \M$ subordinate to this open cover. We now define
\begin{equation}
\label{eq:GammaFibresMain6}
 \Phi^{(c)}= \sum_{i=1}^{l_c} h_i^{(c)}\phi_{\tau_i^{(c)}}\colon \C^k \to \M. \end{equation}
Since each $\phi_{\tau}$ is c.p.c.\ order zero and $h_1^{(c)},\dots,h_{l_c}^{(c)}$ are mutually orthogonal and central, it follows that $\Phi^{(c)}$ is itself a c.p.c.\ order zero map.

To show \eqref{eq:GammaFibresMain2}, we note that for each $c \in \{0,\ldots,m\}$ we have 
\begin{equation}
\begin{split}
\Big\|\Phi^{(c)}(1_k)-\sum_{i=1}^{l_c} h_i^{(c)}\Big\|_{2,\rm u} &\leq
\Big\|\sum_{i=1}^{l_c}(\phi_{\tau_i^{(c)}}(1_k)-1_{\M}) h_i^{(c)}\Big\|_{2,\rm u} \\
&\leq \frac{\eps}{m+1}
\end{split}
\end{equation}
by \eqref{eq:GammaFibresMain4a} since every $\tau \in K$ is in the support of at most one of the functions $h_1^{(c)},\ldots,h_{l_c}^{(c)}$.
Summing over all $c\in\{0,\dots,m\}$ and using the triangle inequality, we obtain \eqref{eq:GammaFibresMain2} since $\sum_{c,i} h_i^{(c)}=1_{\M}$.

To show \eqref{eq:GammaFibresMain1}, fix $c\in\{0,\dots,m\}$, $j\in\{1,\dots,k\}$, and $b \in \mathcal F$.
For $\tau \in K$, there is at most one $i\in\{1,\dots,l_c\}$ such that $\tau \in U_i^{(c)}$.
If no such $i$ exists, then $\big\|[\Phi^{(c)}(e_j),b]\big\|_{2,\tau}=0$, and otherwise, for this $i$, we have
\begin{equation}
\begin{split}
\big\|[\Phi^{(c)}(e_j),b]\big\|_{2,\tau}
&\eqrel[l]{$\stackrel{\eqref{eq:GammaFibresMain6}}{=}$} \big\|[h_i^{(c)}\phi_{\tau_i^{(c)}}(e_j),b]\big\|_{2,\tau} \\
&\eqrel[l]{$\stackrel{\eqref{eq:GammaFibresMain4}}{\leq}$} \eps.
\end{split}
\end{equation}
Similarly for \eqref{eq:GammaFibresMain3}, fix $\tau \in K$, $c\in\{0,\ldots,m\}$, and $n\in\{1,\ldots,N\}$.
If there is no $i$ for which $\tau \in U_i^{(c)}$, then 
\begin{equation}
	\tau(\Phi^{(c)}(e_j)^n)=\tau(\Phi^{(c)}(1_k)^n)=0
\end{equation}
for all $j \in \{1, \ldots, k\}$.
Otherwise, there is exactly one $i$ for which $\tau \in U_i^{(c)}$, and then we have, for $j\in\{1,\dots,k\}$,
\begin{equation}\label{eq:pi}
\begin{split}
\tau(\Phi^{(c)}(e_j)^n)
&\eqrel[i]{$\stackrel{\eqref{eq:GammaFibresMain6}}=$}
\tau((h_i^{(c)})^n\phi_{\tau_i^{(c)}}(e_j)^n) \\
&\eqrel[i]{$=$} \tau((h_i^{(c)})^n)\tau(\phi_{\tau_i^{(c)}}(e_j)^n) \\
&\eqrel[i]{$\stackrel{\eqref{eq:GammaFibresMain5}}{\capprox{\eps/2}}$} \frac1k\tau((h_i^{(c)})^n),
\end{split}
\end{equation}
where the unlabelled inequality uses that the $h_{i}^{(c)}$ are central and $\tau$ is an extremal trace on $\mathcal M$ (as the fibre over $\tau$ is a factor).\footnote{More generally, if $A$ is a C$^*$-algebra, $h, a \in A$ with $h$ central, and $\tau$ is an extremal trace on $A$, then $\tau(ha) = \tau(h) \tau(a)$.  Indeed, we may assume $h \geq 0$.  If $\tau(h) = 0$, this follows from the Cauchy--Schwarz inequality.  When $\tau(h) \neq 0$, note that $b \mapsto \tau(hb)/\tau(h)$ is a trace on $A$ dominated by $\tau(h)^{-1} \tau$ and hence equals $\tau$.}
Since  $\Phi^{(c)}$ is c.p.c.\ order zero, we have $\Phi^{(c)}(1_k)^n = \sum_{j'=1}^k \Phi^{(c)}(e_{j'})^n$.
Thus,
\begin{equation}
\begin{split}
\frac1k\tau(\Phi^{(c)}(1_k)^n)&=\frac1k\sum_{j'=1}^k \tau(\Phi^{(c)}(e_{j'})^n) \\
&\approx_{\eps/2} \frac1k\tau((h_i^{(c)})^n)  \\
&\approx_{\eps/2} \tau(\Phi^{(c)}(e_j)^n)
\end{split}
\end{equation}
for all $j\in\{1,\dots,k\}$, using \eqref{eq:pi} for both approximations.
\end{proof}

\subsection{Orthogonal tracial division}

The next step towards proving Theorem~\ref{thm:GammaFibresBundles} is the construction of mutually orthogonal positive contractions in $\M^\infty$ that commute with a given separable subset $S$, satisfy tracial factorisation with respect to $S$, and do not vanish on any trace in $K^\infty$.
The existence of such families of mutually orthogonal positive contractions follows from Lemma \ref{lem:GammaFibresMain} and makes crucial use of the fact that $m$ is independent of $k$. 

\begin{lemma}[{cf.\ \cite[Lemma 4.3]{TWW15}}]
\label{lem:OrthogonalContractions}
Given $m,r \in \mathbb{N}$, there exists $\gamma_{m,r}>0$ with the following property:
for any $W^*$-bundle $(\M,K,E)$ with $\dim(K)\leq m$ whose fibres are {\rm II}$_1$ factors with property $\Gamma$ and  $\|\cdot\|_{2,\rm u}$-separable subset $S \subseteq \M^\infty$, there exist mutually orthogonal positive contractions $d_0,\dots,d_r\in \M^\infty \cap S'$ such that
\begin{align}
\label{eq:orthog-lemma1a}
\tau(f(d_i)s)&=\tau(f(d_i))\tau(s)
\shortintertext{and}
\label{eq:orthog-lemma1b}
 \tau(d_i) &\geq \gamma_{m,r}
\end{align}
for all $i\in\{0,\dots,r\},\ s\in S,\ \tau \in K^\infty$, and $f \in C_0(0,1]$.
\end{lemma}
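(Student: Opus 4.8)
The plan is to apply Lemma~\ref{lem:GammaFibresMain} with a large value of $k$, extract the contractions $d_i$ by a combinatorial amalgamation, and then use reindexing to move everything into $\M^\infty \cap S'$ and to acquire tracial factorisation; this follows closely the structure of \cite[Lemma~4.3]{TWW15}, with the tracial bookkeeping added.

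First I would reduce to a commutation-with-$\M$ statement. Let $\M_0 \subseteq \M$ be the $\|\cdot\|_{2,\rm u}$-separable subalgebra associated to $S$ by Lemma~\ref{lem:ReindexTracialfactorisation}. It suffices to produce mutually orthogonal positive contractions $d_0,\dots,d_r \in \M^\infty \cap \M_0'$ with $\tau(d_i) \geq \gamma_{m,r}$ for all $i$ and all $\tau \in K^\infty$: applying the reindexing $^*$-homomorphism $\psi_\rho$ supplied by Lemma~\ref{lem:ReindexTracialfactorisation} to the (separable) C$^*$-algebra generated by $d_0,\dots,d_r$ preserves orthogonality, positivity and contractivity, moves the $d_i$ into $\M^\infty \cap S'$, and preserves the bound \eqref{eq:orthog-lemma1b} because $\tau\circ\psi_\rho$ is again evaluation at a point of $K^\infty$; moreover \eqref{eq:ReindexTracialfactorisation} applied with $t = f(d_i)$, which lies in the C$^*$-algebra generated by the $d_i$, yields \eqref{eq:orthog-lemma1a} for all $f \in C_0(0,1]$ at once.

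Next I would run the main construction. Apply Lemma~\ref{lem:GammaFibresMain} with $S = \M_0$ and $k$ a suitably large multiple of $r+1$ to obtain c.p.c.\ order zero maps $\Phi^{(0)},\dots,\Phi^{(m)} \colon \C^k \to \M^\infty \cap \M_0'$ with $\sum_{c=0}^m \Phi^{(c)}(1_k) = 1_{\M^\infty}$ and, writing $h_c = \Phi^{(c)}(1_k)$, $\tau(f(\Phi^{(c)}(e_j))) = \tfrac1k\tau(f(h_c))$ for all $f \in C_0(0,1]$, $\tau \in K^\infty$, $c$ and $j$. Since $\sum_c h_c = 1$, the sets $W_c = \{\tau \in K^\infty : \tau(h_c) > \tfrac1{2(m+1)}\}$ form an open cover of $K^\infty$; fix a partition of unity $(e_c)_{c=0}^m$ in $C(K^\infty) \subseteq Z(\M^\infty)$ subordinate to $(W_c)$. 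For each colour $c$ partition $\{1,\dots,k\}$ into sets $J_0^{(c)},\dots,J_r^{(c)}$ of size $k/(r+1)$ and put $b_i^{(c)} = \Phi^{(c)}\big(\sum_{j \in J_i^{(c)}} e_j\big)$; by Lemma~\ref{lem:oz-projection} and the order zero relation the $b_0^{(c)},\dots,b_r^{(c)}$ are, for each fixed $c$, mutually orthogonal positive contractions, and $\tau(f(b_i^{(c)})) = \tfrac1{r+1}\tau(f(h_c))$. Amalgamating over colours by $d_i = \sum_{c=0}^m e_c\,b_i^{(c)}$ produces positive contractions in $\M^\infty \cap \M_0'$ with $\tau(d_i) = \sum_c \tau(e_c)\,\tau(b_i^{(c)})$; whenever $\tau(e_c) > 0$ one has $\tau \in W_c$, so $\tau(b_i^{(c)}) = \tfrac1{r+1}\tau(h_c) > \tfrac1{2(m+1)(r+1)}$, and as $\sum_c \tau(e_c) = 1$ this forces $\tau(d_i) \geq \gamma_{m,r} := \tfrac1{2(m+1)(r+1)}$, uniformly over $K^\infty$. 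Here it is essential, as highlighted before the lemma, that the number of colours $m+1$ does not grow with $k$, so that $\gamma_{m,r}$ really depends only on $m$ and $r$.

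The step I expect to be the main obstacle is making the $d_i$ \emph{genuinely} mutually orthogonal rather than merely orthogonal colour by colour. In $d_i d_{i'}$ the terms with $c = c'$ vanish by the order zero property, but the cross terms $e_c e_{c'} b_i^{(c)} b_{i'}^{(c')}$ with $c \neq c'$ do not vanish for an arbitrary subordinate partition of unity, and one cannot simply demand pairwise disjoint supports for the $e_c$ since $K^\infty$ need not be zero-dimensional. Eliminating these cross terms is precisely where $\dim(K) \leq m$, the order zero functional calculus (via the functions $g_{\gamma_1,\gamma_2}$), and the freedom to enlarge $k$ while keeping $m$ fixed have to be combined: one replaces the naive amalgamation above by a more elaborate one in which the contributions of distinct colours to distinct $d_i$ are cut down so as to be orthogonal, absorbing the loss into the constant $\gamma_{m,r}$. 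This is the W$^*$-bundle analogue of the corresponding step in \cite[Lemma~4.3]{TWW15}, and it is the only point in the proof where finite covering dimension is used.
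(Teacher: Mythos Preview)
Your reduction via Lemma~\ref{lem:ReindexTracialfactorisation} is correct and matches the paper exactly. The difficulty is precisely where you locate it, but the paper does \emph{not} resolve it by refining your colour-amalgamation $d_i = \sum_c e_c\,b_i^{(c)}$; that route leads nowhere without extra structure on $K^\infty$, and your final paragraph is a description of the problem rather than a solution. The claim that ``one replaces the naive amalgamation by a more elaborate one in which the contributions of distinct colours to distinct $d_i$ are cut down so as to be orthogonal'' is not carried out, and I do not see how to do it along the lines you suggest: the cross terms $e_c e_{c'} b_i^{(c)} b_{i'}^{(c')}$ involve products of elements from \emph{different} order zero maps $\Phi^{(c)}$ and $\Phi^{(c')}$, over which you have no orthogonality control whatsoever.

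The paper's approach sidesteps the cross-colour problem entirely by never trying to glue. It first treats only the case $r=1$. With $k = 2(m+1)$, it forms the \emph{single} positive contraction $a = \sum_{c=0}^m \Phi^{(c)}(e_1)$ and then sets
\[
d_0' = g_{\gamma,2\gamma}(a), \qquad d_1' = 1_{\M^\infty} - g_{0,\gamma}(a), \qquad \gamma = \tfrac{1}{4(m+1)}.
\]
Orthogonality of $d_0'$ and $d_1'$ is automatic, since $g_{\gamma,2\gamma}$ and $1 - g_{0,\gamma}$ have disjoint supports as functions on $[0,1]$; no partition of unity over $K^\infty$ is needed. The trace lower bounds come from $g_{\gamma,2\gamma}(t) \geq t - \gamma$ together with $\tau(a) = \tfrac{1}{2(m+1)}$ (for $d_0'$), and from a Cuntz-subequivalence estimate $\tau(a^{1/l}) \leq \sum_c \tau(\Phi^{(c)}(e_1)^{1/l}) \leq \tfrac12$ (for $d_1'$), the latter using \eqref{eq:GammaFibresMainB} with $f(t)=t^{1/l}$. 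General $r$ is then obtained by a short induction: assuming the result for $r$, one applies the $r=1$ case with $S$ enlarged to include $d_0,\dots,d_r$, obtaining $\tilde d_0,\tilde d_1$ that commute with and tracially factorise over the $d_j$; the $2(r+1)$ products $\tilde d_i d_j$ are then mutually orthogonal with $\gamma_{m,2r+1} = \gamma_{m,1}\gamma_{m,r}$. Thus the finite-dimensionality of $K$ enters only through Lemma~\ref{lem:GammaFibresMain}, and orthogonality is manufactured by one-variable functional calculus rather than by any colour-by-colour decomposition.
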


\begin{proof}
Fix $m \in \N$. We begin with the case $r = 1$ and define
\begin{equation}\label{eq:gamma-m1}
    \gamma_{m,1} = \frac1{4(m+1)}. 
\end{equation}
Let $S \subseteq \M^\infty$ be $\|\cdot\|_{2,\rm u}$-separable. Let $\M_0 \subseteq \M$ be a $\|\cdot\|_{2,\rm u}$-separable subalgebra of $\M$ such that Lemma \ref{lem:ReindexTracialfactorisation} holds.  

It suffices to prove that there exist orthogonal positive contractions $d_0',d_1'\in \M^\infty \cap \M_0'$ such that
\begin{equation}
\label{eq:OrthogonalContractions1}
    \tau(d_i')\geq \gamma_{m,1}
\end{equation}
for all $\tau \in K^\infty$ and $i\in\{0,1\}$.  Indeed, taking $T$ to be the C$^*$-algebra generated by $d_0'$ and $d_1'$, Lemma \ref{lem:ReindexTracialfactorisation} provides us with a reindexing $^*$-homomorphism $\psi_\rho:\M^\infty \rightarrow \M^\infty$ such that $\psi_\rho(T) \subseteq \M^\infty \cap S'$ and the tracial factorisation in \eqref{eq:ReindexTracialfactorisation} holds.  Define $d_i = \psi_\rho(d_i')$ for $i\in\{0,1\}$ and note that \eqref{eq:orthog-lemma1a} follows from \eqref{eq:ReindexTracialfactorisation}.  Further, for all $\tau \in K^\infty$, we have $\tau \circ \psi_\rho \in K^\infty$, so \eqref{eq:orthog-lemma1b} follows from \eqref{eq:OrthogonalContractions1}.

Let $\Phi^{(0)},\dots,\Phi^{(m)}\colon \C^{2(m+1)} \to \M^\infty \cap \M_0'$ be given by Lemma \ref{lem:GammaFibresMain} (with $k=2(m+1)$ and $S=\M_0$).
Define
\begin{equation} \label{eqn:def-of-a}
    a= \sum_{c=0}^m \Phi^{(c)}(e_1), 
\end{equation}
which is a positive contraction in $\M^\infty \cap \M_0'$ since $\sum_c \Phi^{(c)}$ is a u.c.p.\ map. 
Making use of the continuous functions defined in \eqref{eq:gDef}, set
\begin{equation} 
d_0'= g_{\gamma_{m,1},2\gamma_{m,1}}(a) \quad \text{and} \quad  d_1'= 1_{\M^\infty}-g_{0,\gamma_{m,1}}(a)
\end{equation}
and note that these are orthogonal positive contractions in $\M^\infty \cap \M_0'$ by construction.

To show \eqref{eq:OrthogonalContractions1} for $i=0$, observe that $g_{\gamma_{m,1},2\gamma_{m,1}}(t) \geq t - \gamma_{m,1}$ for all $t \in [0,1]$, and so for $\tau \in K^\infty$,
\begin{equation}
\begin{split}
	\tau(d_0') &\eqrel[b]{$\geq$} \tau(a) - \gamma_{m,1} \\
	&\eqrel[b]{$\stackrel{\eqref{eqn:def-of-a}}{=}$} \sum_{c=0}^m \tau(\Phi^{(c)}(e_1)) - \gamma_{m,1} \\
 &\eqrel[b]{$\stackrel{\eqref{eq:GammaFibresMainB}}{=}$}  \tfrac{1}{2(m+1)} \sum_{c=0}^m \tau(\Phi^{(c)}(1_{2(m+1)})) - \gamma_{m,1} 
 \\
	&\eqrel[b]{$\stackrel{\eqref{eq:GammaFibresMainA}}{=}$} \tfrac{1}{2(m+1)} - \gamma_{m,1} \\
        &\eqrel[b]{$\stackrel{\eqref{eq:gamma-m1}}{=}$} \gamma_{m,1}.
\end{split}
\end{equation}

To show \eqref{eq:OrthogonalContractions1} for $i=1$, we compute that for $\tau \in K^\infty$,
\begin{equation}
\begin{split}
 	\tau(1_{\M^\infty} - d_1') &\eqrel[a]{$=$} \tau(g_{0,\gamma_{m,1}}(a))\\
        &\eqrel[a]{$\leq$} \lim_{l \rightarrow \infty} \tau(a^{1/l})\\
	&\eqrel[a]{$\stackrel{\eqref{eqn:def-of-a}}{=}$} \lim_{l \rightarrow \infty} \tau\Big(\Big(\sum_{c=0}^m\Phi^{(c)}(e_1)\Big)^{1/l}\Big)\\
	&\eqrel[a]{$\leq$} \lim_{l \rightarrow \infty} \sum_{c=0}^m  \tau(\Phi^{(c)}(e_1)^{1/l}) \\
	&\eqrel[a]{$\stackrel{\eqref{eq:GammaFibresMainB}}{=}$} \lim_{l \rightarrow \infty} \sum_{c=0}^m  \tfrac{1}{2(m+1)}\tau(\Phi^{(c)}(1_{2(m+1)})^{1/l})\\
	&\eqrel[a]{$\leq$} \tfrac{m+1}{2(m+1)}\\
	&\eqrel[a]{$=$} \tfrac{1}{2},
\end{split}
\end{equation}
where in the fourth line we use the fact that $\sum_{c=0}^m\Phi^{(c)}(e_1)$ is Cuntz subequivalent to $\bigoplus_{c=0}^m\Phi^{(c)}(e_1)$.\footnote{For positive elements $a$ and $b$ in a $C^*$-algebra $A$, $a$ is \emph{Cuntz subequivalent} to $b$ if there is a sequence $(v_n)_{n=1}^\infty \subseteq A$ with $\|v_n^*bv_n - a \| \rightarrow 0$.  The relevance of this relation dates back to \cite[Section~1]{Cuntz}.}
 Thus, $\tau(d_1') \geq 1/2 \geq\gamma_{m,1}$. This completes the proof of the case $r=1$.

In the general case, by enlarging $r$, we may assume $r = 2^l - 1$ for some $l \geq 1$.  We will prove the result by induction on $l$, starting with the case $l = 1$ handled above.  Fix $m\in\N$ and a $\|\cdot\|_{2,\rm u}$-separable subset $S \subseteq M^\infty$. Assume the result holds for $r = 2^l - 1$ and let $d_0, \ldots, d_r \in \mathcal M^\infty \cap S'$ be positive orthogonal contractions satisfying \eqref{eq:orthog-lemma1a} and \eqref{eq:orthog-lemma1b}.  Let $T \subseteq \mathcal M^\infty$ denote the C$^*$-algebra generated by $S \cup \{ d_0, \ldots, d_r \}$. Note that $T$ is $\|\cdot\|_{2,\rm u}$-separable.  

By the $r=1$ case proved above (but now with $T$ replacing $S$), there are positive orthogonal contractions $\tilde{d}_0, \tilde{d}_1 \in \mathcal M^\infty \cap T'$ satisfying
\begin{align}
	\tau(f(\tilde{d}_i)t)&=\tau(f(\tilde{d}_i))\tau(t)
    \label{eq:orthog-lemma2a}
\shortintertext{and} 
	\tau(\tilde{d}_i) &\geq \gamma_{m,1}
    \label{eq:orthog-lemma2b}
\end{align}
for all $i\in\{0, 1\}$, $t \in T$, $\tau \in K^\infty$, and $f \in C_0((0, 1])$.  We will show the $2^{l+1}$ elements $\tilde d_i d_j \in \mathcal M^\infty \cap S'$ for $i\in\{0, 1\}$ and $j\in\{0, \ldots, r\}$ satisfy the required properties with $\gamma_{m, 2r+1} = \gamma_{m, 1} \gamma_{m, r}$.

First note that the $\tilde d_i d_j$ are clearly mutually orthogonal positive contractions as each $\tilde d_i$ commutes with each $d_j$ by construction.  For all $i\in\{0, 1\}$, $j\in\{0, \ldots, r\}$ and $\tau \in K^\infty$, we have
\begin{equation}
	\tau(\tilde d_i d_j) \stackrel{\eqref{eq:orthog-lemma2a}}{=} \tau(\tilde d_i) \tau(d_j) \geq \gamma_{m, 1} \gamma_{m, r} = \gamma_{m, 2r+1}.
\end{equation}
Let $i \in \{0, 1\}$, $j \in \{0, \ldots, r\}$, $s \in S$, $\tau \in K^\infty$, and $n \in \N$.  Then
\begin{equation}
\begin{split}
	\tau((\tilde d_i d_j)^n s) 
	&\eqrel[c]{$=$} \tau(\tilde d_i^n d_j^n s) \\
	&\eqrel[c]{$\stackrel{\eqref{eq:orthog-lemma2a}}{=}$} \tau(\tilde d_i^n) \tau(d_j^n s) \\
	&\eqrel[c]{$\stackrel{\eqref{eq:orthog-lemma1a}}{=}$} \tau(\tilde d_i^n) \tau(d_j^n) \tau(s)  \\
	&\eqrel[c]{$\stackrel{\eqref{eq:orthog-lemma2a}}{=}$}
	\tau(\tilde d_i^n d_j^n) \tau(s) \\
	&\eqrel[c]{$=$} \tau((\tilde d_i d_j)^n) \tau(s).
\end{split}
\end{equation}
By the Stone--Weierstrass theorem, this implies
\begin{equation}
	\tau(f(\tilde d_i d_j)s) = \tau(f(\tilde d_i d_j)) \tau(s)
\end{equation}
for all $f \in C_0(0,1]$.
\end{proof}

\subsection{Orthogonal gluing}

A sum of c.p.c.\ order zero maps is typically no longer order zero. One way to get an order zero sum is to ensure that the ranges of the maps to be summed are orthogonal.  

We can force this to be the case by multiplying the order zero maps with orthogonal positive contractions commuting with the ranges. 
The following lemma carries out this orthogonal gluing for the c.p.c.\ order zero maps constructed in Lemma \ref{lem:GammaFibresMain} using the orthogonal positive contractions constructed in Lemma \ref{lem:OrthogonalContractions}. 

This gluing operation does not preserve unitality, but we can compute a uniform lower bound for the trace of the image of the unit.  In the next lemma, it is crucial that $\alpha$ depends only on the dimension $m$ of the base space and is independent of both the integer $k$ and the separable subset $S \subseteq \M^\infty$.\footnote{The $\alpha$ constructed is also independent of the bundle $(\mathcal M, K, E)$ itself, but this is not important in the application of the lemma in the proof of Theorem~\ref{thm:GammaFibresBundles}.}

\begin{lemma}[{cf.\ \cite[Proposition 4.4]{TWW15}}]
\label{lem:GammaFibresAlpha}
Given $m \in \mathbb{N}$, there exists $\alpha \in (0,1]$ such that the following holds:
for any $k \in \N$, $W^*$-bundle $(\M,K,E)$ with $\dim(K)\leq m$ whose fibres are {\rm II}$_1$ factors with property~$\Gamma$, and $\|\cdot\|_{2,\rm u}$-separable subset $S \subseteq \M^\infty$, there exists a c.p.c.\ order zero map $\Phi\colon \C^k \to \M^\infty \cap S'$ such that
\begin{align}
	\tau(\Phi(1_k)) &\geq \alpha,
\label{eq:GammaFibresAlphaA}\\
	\tau(f(\Phi(e_j))) &= \frac{1}{k}\tau(f(\Phi(1_k))),
\label{eq:GammaFibresAlphaB}
\shortintertext{and}
	\tau(f(\Phi(e_j))s) &= \tau(f(\Phi(e_j)))\tau(s)
\label{eq:GammaFibresAlphaC}
\end{align}
for all $\tau \in K^\infty,\ j\in\{1,\dots,k\},\ f\in C_0(0,1]$, and $s\in S$.
 \end{lemma}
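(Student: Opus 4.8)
The plan is to glue the coloured order zero maps produced by Lemma~\ref{lem:GammaFibresMain} against the mutually orthogonal positive contractions produced by Lemma~\ref{lem:OrthogonalContractions}, exploiting that in both lemmas the relevant constant ($m+1$ colours, resp.\ $\gamma_{m,m}$) depends only on $m$. First I would invoke Lemma~\ref{lem:ReindexTracialfactorisation} for the given $S$ to fix a $\|\cdot\|_{2,\rm u}$-separable $\M_0 \subseteq \M$, then apply Lemma~\ref{lem:GammaFibresMain} with $\M_0$ in place of $S$ to obtain c.p.c.\ order zero maps $\Phi^{(0)},\dots,\Phi^{(m)} \colon \C^k \to \M^\infty \cap \M_0'$ with $\sum_c \Phi^{(c)}(1_k) = 1_{\M^\infty}$ and the tracial division \eqref{eq:GammaFibresMainB}. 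Next I would apply Lemma~\ref{lem:OrthogonalContractions} with $r = m$ and with $C^*\big(\M_0 \cup \bigcup_c \Phi^{(c)}(\C^k)\big)$ in place of $S$, obtaining mutually orthogonal positive contractions $d_0,\dots,d_m \in \M^\infty$, each commuting with $\M_0$ and every $\Phi^{(c)}(\C^k)$, tracially factorising over that set, and with $\tau(d_c) \ge \gamma_{m,m}$ for all $\tau \in K^\infty$. Then set $\alpha = \gamma_{m,m}$ (which depends only on $m$, as required) and $\Phi_0 = \sum_{c=0}^m d_c \Phi^{(c)} \colon \C^k \to \M^\infty \cap \M_0'$.

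The first thing to check is that $\Phi_0$ is c.p.c.\ order zero. Each summand $x \mapsto d_c \Phi^{(c)}(x) = d_c^{1/2}\Phi^{(c)}(x)d_c^{1/2}$ (using that $d_c$ commutes with $\Phi^{(c)}(\C^k)$) is c.p.c., and $d_c\Phi^{(c)}(x) \le \Phi^{(c)}(x)$ for positive $x$ since $(1-d_c)$ and $\Phi^{(c)}(x)$ are commuting positive elements; hence $x \mapsto d_c\Phi^{(c)}(x)$ is order zero by Lemma~\ref{lem:oz-leq}. For $c \ne c'$ the ranges of the summands multiply to zero, because $d_{c'}$ commutes with $\Phi^{(c)}(\C^k)$ and $d_c d_{c'} = 0$; so $\Phi_0(1_k) = \sum_c d_c\Phi^{(c)}(1_k)$ is a sum of mutually orthogonal positive contractions (hence a contraction), $\Phi_0$ is c.p.c., and orthogonality in $\C^k$ is preserved, making $\Phi_0$ order zero.

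Next I would verify the three conclusions for $\Phi_0$ and then transport them to $\M^\infty \cap S'$ by a single reindexing. For $\tau \in K^\infty$, factorisation of $d_c$ gives $\tau(\Phi_0(1_k)) = \sum_c \tau(d_c)\tau(\Phi^{(c)}(1_k)) \ge \gamma_{m,m}\sum_c \tau(\Phi^{(c)}(1_k)) = \gamma_{m,m}$. For the tracial division, mutual orthogonality of the ranges reduces matters to showing $\tau(f(d_c\Phi^{(c)}(e_j))) = \tfrac1k\tau(f(d_c\Phi^{(c)}(1_k)))$ for each $c$, and by the Stone--Weierstrass theorem (with $f(0) = 0$) to the case $f(t) = t^n$, where (using that $d_c$ commutes with $\Phi^{(c)}(e_j)$, factorisation of $d_c$, and \eqref{eq:GammaFibresMainB}) one gets $\tau((d_c\Phi^{(c)}(e_j))^n) = \tau(d_c^n)\tau(\Phi^{(c)}(e_j)^n) = \tfrac1k\tau(d_c^n)\tau(\Phi^{(c)}(1_k)^n) = \tfrac1k\tau((d_c\Phi^{(c)}(1_k))^n)$. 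Finally, apply Lemma~\ref{lem:ReindexTracialfactorisation} (with the $\M_0$ fixed above) to $T = C^*(\Phi_0(\C^k)) \subseteq \M^\infty \cap \M_0'$ to obtain a reindexing $^*$-homomorphism $\psi_\rho$ with $\psi_\rho(T) \subseteq \M^\infty \cap S'$ and $\tau(\psi_\rho(t)s) = \tau(\psi_\rho(t))\tau(s)$ for $t \in T$, $s \in S$, $\tau \in K^\infty$. Put $\Phi = \psi_\rho \circ \Phi_0$, which is c.p.c.\ order zero into $\M^\infty \cap S'$. Since $\tau \circ \psi_\rho \in K^\infty$ for every $\tau \in K^\infty$, and $\psi_\rho$ commutes with functional calculus, \eqref{eq:GammaFibresAlphaA} and \eqref{eq:GammaFibresAlphaB} follow from the corresponding facts for $\Phi_0$, while \eqref{eq:GammaFibresAlphaC} follows from the factorisation property of $\psi_\rho$ applied to $f(\Phi_0(e_j)) \in C^*(\Phi_0(\C^k)) = T$.

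The main difficulty I anticipate is bookkeeping rather than conceptual: one must track carefully which relative commutant each object lies in and which ``base set'' it tracially factorises over, so that Lemmas~\ref{lem:OrthogonalContractions} and~\ref{lem:ReindexTracialfactorisation} apply with the ranges needed to produce \eqref{eq:GammaFibresAlphaC} (this is why the maps must first be built inside $\M^\infty \cap \M_0'$ and only afterwards pushed into $\M^\infty \cap S'$); and one must check that the gluing $\Phi_0 = \sum_c d_c\Phi^{(c)}$ genuinely remains c.p.c.\ order zero, for which the domination argument via Lemma~\ref{lem:oz-leq} together with $d_c d_{c'} = 0$ is the crucial input. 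Finite-dimensionality of $K$ enters only through the fixed number $m+1$ of colours, which is exactly what makes $\alpha = \gamma_{m,m}$ depend on $m$ alone.
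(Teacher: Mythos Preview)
Your proposal is correct and follows essentially the same approach as the paper: set $\alpha = \gamma_{m,m}$, glue the coloured maps $\Phi^{(c)}$ from Lemma~\ref{lem:GammaFibresMain} against the orthogonal contractions $d_c$ from Lemma~\ref{lem:OrthogonalContractions} to form $\Phi_0 = \sum_c d_c\Phi^{(c)}$, verify \eqref{eq:GammaFibresAlphaA} and \eqref{eq:GammaFibresAlphaB} by factorisation and orthogonality, and then reindex via Lemma~\ref{lem:ReindexTracialfactorisation} to land in $\M^\infty \cap S'$ and obtain \eqref{eq:GammaFibresAlphaC}. The only cosmetic differences are that you justify order zero of each summand via Lemma~\ref{lem:oz-leq} (the paper just observes it directly from commutation and orthogonality) and that you split the verification of \eqref{eq:GammaFibresAlphaB} colourwise before summing, whereas the paper expands $\Phi_0(e_j)^n$ directly.
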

 
\begin{proof}
Set $\alpha =\gamma_{m,m}$ from Lemma \ref{lem:OrthogonalContractions}. 
Fix $k \in \N$ and let $(\M,K,E)$ be a W$^*$-bundle with $\dim(K)\leq m$ whose fibres are II$_1$ factors with property $\Gamma$.
Further, fix a $\|\cdot\|_{2,\rm u}$-separable subset $S \subseteq \M^\infty$.

Let $\M_0 \subseteq \M$ be the $\|\cdot\|_{2,\rm u}$-separable subset of $\M$ such that Lemma~\ref{lem:ReindexTracialfactorisation} holds.
It suffices to prove that there exists a c.p.c.\ order zero map $\Phi\colon\C^k \to \M^\infty \cap \M_0'$ such that \eqref{eq:GammaFibresAlphaA} and \eqref{eq:GammaFibresAlphaB} hold for all $\tau \in K^\infty,\ j\in\{1,\dots,k\}$ and $f\in C_0(0,1]$. Indeed, taking $T$ to be the C$^*$-algebra generated by $\{\Phi(e_j): j =1,\ldots,k\}$, Lemma \ref{lem:ReindexTracialfactorisation} provides us with a reindexing $^*$-homomorphism $\psi_\rho:\M^\infty \rightarrow \M^\infty$ such that, after replacing $\Phi$ with $\psi_\rho \circ \Phi$, all three conditions \eqref{eq:GammaFibresAlphaA}, \eqref{eq:GammaFibresAlphaB} and \eqref{eq:GammaFibresAlphaC} are satisfied.

Let $\Phi^{(0)},\ldots,\Phi^{(m)}\colon\mathbb{C}^{k}\rightarrow \M \cap \M_0'$ be maps as in Lemma~\ref{lem:GammaFibresMain}.
Let $d_0,\ldots,d_m \in \M^\infty \cap \big(\Phi^{(0)}(\C^k) \cup \cdots \cup \Phi^{(m)}(\C^k) \cup \M_0\big)'$ be orthogonal positive contractions, constructed using Lemma~\ref{lem:OrthogonalContractions}, such that
\begin{align}
\label{eq:GammaFibresAlpha1}
\tau(f(d_i)b)&= \tau(f(d_i))\tau(b)
\shortintertext{and}
\label{eq:GammaFibresAlpha2}
\tau(d_i) &\geq \gamma_{m,m} = \alpha
\end{align}
for all $c\in\{0,\dots,m\}$,  $b \in {\rm C}^*(\Phi^{(c)}(\mathbb C^k))$,  $\tau \in K^\infty$, and $f\in C_0(0,1]$.
Define
\begin{equation}
\label{eq:GammaFibresAlpha3}
 \Phi = \sum_{c=0}^m d_c\Phi^{(c)}\colon\mathbb{C}^k \rightarrow \M^\infty \cap \M_0'. \end{equation}
Since the $d_0,\ldots,d_m$ are orthogonal positive contractions commuting with the images of the $\Phi^{(c)}$, $\Phi$ is c.p.c.\ order zero.

Let $\tau \in K^\infty$, $j\in\{1,\dots,k\}$, and $n\in\N$. Then
\begin{equation}\label{eq:GammaFibresAlphaB-computation}
\begin{split}
	\tau(\Phi(e_j)^n) &\eqrel[e]{$\stackrel{\eqref{eq:GammaFibresAlpha3}}=$} \sum_{c=0}^m \tau(d_c^n\Phi^{(c)}(e_j)^n)\\
	&\eqrel[e]{$\stackrel{\eqref{eq:GammaFibresAlpha1}}{=}$} \sum_{c=0}^m \tau(d_c^n)\tau(\Phi^{(c)}(e_j)^n)\\
	&\eqrel[e]{$\stackrel{\eqref{eq:GammaFibresMainB}}=$} \sum_{c=0}^m \frac{1}{k}\tau(d_c^n)\tau(\Phi^{(c)}(1_k)^n)\\
	&\eqrel[e]{$\stackrel{\eqref{eq:GammaFibresAlpha1}}{=}$}  \frac1k \sum_{c=0}^m \tau(d_c^n\Phi^{(c)}(1_k)^n)\\
    &\eqrel[e]{$\stackrel{\eqref{eq:GammaFibresAlpha3}}=$} \frac{1}{k}\tau(\Phi(1_k)^n),
\end{split}
\end{equation}
using in the first and last lines that the $d_c$ are mutually orthogonal and  each $d_c$ commutes with the range of each $\Phi^{(c')}$.
By linearity, continuity, and the Stone--Weierstrass theorem, \eqref{eq:GammaFibresAlphaB} follows from \eqref{eq:GammaFibresAlphaB-computation}.

Now let $\tau \in K^\infty$. Then
\begin{equation}
\begin{split}
	\tau(\Phi(1_k)) &\eqrel[d]{$\stackrel{\eqref{eq:GammaFibresAlpha3}}=$} \sum_{c=0}^m \tau(d_c\Phi^{(c)}(1_k))\\
 	&\eqrel[d]{$\stackrel{\eqref{eq:GammaFibresAlpha1}}{=}$} \sum_{c=0}^m \tau(d_c)\tau(\Phi^{(c)}(1_k))\\
	&\eqrel[d]{$\stackrel{\eqref{eq:GammaFibresAlpha2}}{\geq}$} \sum_{c=0}^m \alpha\tau(\Phi^{(c)}(1_k)).
\end{split}
\end{equation}
Hence,  $\tau(\Phi(1_k)) \geq \alpha$ by \eqref{eq:GammaFibresMainA}. This verifies \eqref{eq:GammaFibresAlphaA}.
\end{proof}
\subsection{The maximality argument}

Theorem \ref{thm:GammaFibresBundles} is now proven via a maximality argument based on Lemma \ref{lem:GammaFibresAlpha}.
Roughly, if we can take $\alpha = 1$ in Lemma~\ref{lem:GammaFibresAlpha}, then Theorem~\ref{thm:GammaFibresBundles} follows.  
By a reindexing argument, there is a maximal $\alpha_0$ which satisfies Lemma~\ref{lem:GammaFibresAlpha}.  
If $\alpha_0 < 1$, we will use Lemma~\ref{lem:OrthogonalContractions} and the order zero functional calculus (see Section \ref{subsec:order-zero}) to construct a larger $\alpha$ satisfying Lemma~\ref{lem:GammaFibresAlpha}, which will yield a contradiction.

\begin{proof}[Proof of Theorem \ref{thm:GammaFibresBundles}]
Suppose $(\mathcal M, K, E)$ is a W$^*$-bundle such that $m = \dim(K) < \infty$ and every fibre of $\mathcal M$ is a II$_1$ factor with property $\Gamma$.  Let $\Omega$ be the set of all $\alpha \in [0,1]$ for which the conclusion of Lemma~\ref{lem:GammaFibresAlpha} holds, and set $\alpha_0 = \sup \Omega$.
Lemma~\ref{lem:GammaFibresAlpha} implies $\alpha_0 > 0$. Moreover, a standard reindexing argument shows that $\Omega$ is a closed set, so $\alpha_0 \in \Omega$.

It suffices to show that $\alpha_0 = 1$.  Indeed, in this case, for every $k \in \mathbb N$ and $\|\cdot\|_{2, \mathrm u}$-separable subset $S \subseteq \M^\infty$, there exists a c.p.c.\ order zero map $\Phi\colon\C^k \rightarrow \M^\infty \cap S'$ such that
\begin{align}
	\tau(\Phi(1_k)) &\geq 1, 
\label{eq:GammaFibresAlphaEquals1A}\\
	\tau(\Phi(e_j)) &= \frac{1}{k}\tau(\Phi(1_k)),
\label{eq:GammaFibresAlphaEquals1B}
\shortintertext{and}
	\tau(\Phi(e_j)s) &= \tau(\Phi(e_j))\tau(s)
\label{eq:GammaFibresAlphaEquals1C}
\end{align}
for all $\tau \in K^\infty,\ j\in\{1,\dots,k\}$, and $s\in S$. It follows from \eqref{eq:GammaFibresAlphaEquals1A} that $\tau(1_{\M^\infty}-\Phi(1_k)) \leq 0$ for all $\tau \in K^\infty$. Since $1_{\M^\infty} - \Phi(1_k)$ is positive, this implies $\Phi(1_k)=1_{\M^\infty}$. By \cite[Theorem 3.3]{WZ09}, a u.c.p.\ order zero map is a $^*$-homomorphism. Therefore, $\Phi(e_1),\ldots,\Phi(e_k)$ are orthogonal projections summing to $1_{\M^\infty}$, and by \eqref{eq:GammaFibresAlphaEquals1B} and \eqref{eq:GammaFibresAlphaEquals1C}, these projections witness that $\M$ has property $\Gamma$.

Assume for the sake of contradiction that $\alpha_0<1$.  Let $\gamma = \gamma_{m,1} > 0$ be as in Lemma~\ref{lem:OrthogonalContractions}.
Since $0 < \alpha_0 < 1$ and $\gamma > 0$, we may choose $\eps>0$ so that
\begin{equation}\label{eq:GammaFibres-alpha}
\alpha = \alpha_0+\gamma(\alpha_0-\alpha_0^2)-\eps(1-\gamma\alpha_0) > \alpha_0.
\end{equation}
We will show that the conclusion of Lemma~\ref{lem:GammaFibresAlpha} holds with this $\alpha$ (i.e.\ $\alpha \in \Omega$), which will be a contradiction.

Suppose $k \in \mathbb N$ and let $S \subseteq \M^\infty$ be a $\|\cdot\|_{2,\rm u}$-separable subset. Let $\M_0 \subseteq \M$ be a $\|\cdot\|_{2,\rm u}$-separable subalgebra obtained by applying Lemma \ref{lem:ReindexTracialfactorisation} to $S$. By hypothesis, there exists a c.p.c.\ order zero map $\Phi_0\colon\mathbb{C}^k \rightarrow \M^\infty \cap \M_0'$ satisfying
\begin{align}
\label{eq:GammaFibres1}
	\tau(\Phi_0(1_k)) &\geq \alpha_0, \\
\label{eq:GammaFibres2}
	\tau(f(\Phi_0(e_j))) &= \frac{1}{k}\tau(f(\Phi_0(1_k))),
\shortintertext{and}
\label{eq:GammaFibres3}
	\tau(f(\Phi_0(e_j))b) &= \tau(f(\Phi_0(e_j)))\tau(b)
\end{align}
for all $\tau \in K^\infty,\ j\in\{1,\dots,k\},\ f\in C_0(0,1]$, and $b\in \M_0$.
Using Lemma~\ref{lem:OrthogonalContractions}, let $d_0,d_1 \in \M^\infty \cap (\M_0 \cup \Phi_0(\C^k))'$ be orthogonal positive contractions such that
\begin{align}
\label{eq:GammaFibres4}
\tau(f(d_i)b)&= \tau(f(d_i))\tau(b)
\shortintertext{and}
\label{eq:GammaFibres5}
\tau(d_i) &\geq \gamma
\end{align}
for all $i\in\{0,1\},\ \tau \in K^\infty$, $f \in C_0(0,1]$, and $b \in {\rm C}^*(\Phi_0(\mathbb C^k))$.

Let $g_{0,\eps},g_{\eps,2\eps} \in C_0(0,1]$ be the continuous functions defined in \eqref{eq:gDef} and set $\Delta_\eps = g_{0,\eps} - g_{\eps,2\eps}$.
Using the order zero functional calculus, define $\Phi_0' \colon\C^k \to \M^\infty \cap \M_0'$ by
\begin{equation}\label{eq:GammaFibres6}
\begin{split}
\Phi_0' &= d_0\Delta_\eps(\Phi_0)+g_{\eps,2\eps}(\Phi_0) \\
&= d_0 g_{0, \eps}(\Phi_0) + (1 - d_0) g_{\eps, 2\eps}(\Phi_0).
\end{split}
\end{equation}
Since $d_0$ commutes with ${\rm C}^*(\Phi_0(\mathbb C^k))$ and $g_{\eps, 2\eps} \leq g_{0, \eps}$, we have that $\Phi_0' \leq g_{0, \eps}(\Phi_0)$.  Since $g_{0, \eps}(\Phi_0)$ is c.p.c.\ order zero, so is $\Phi_0'$ by Lemma~\ref{lem:oz-leq}.
Fix $\tau \in K^\infty$, $j \in \{1,\dots,k\}$, and $n \in \N$. Using Lemma~\ref{lem:oz-projection} and the binomial theorem, we compute that
\begin{equation}
\begin{split}
\tau(\Phi_0'(e_j)^n)
&\eqrel[f]{$\stackrel{\eqref{eq:GammaFibres6}}=$} \sum_{i=0}^n \binom ni \tau\big(d_0^i(\Delta_\eps^ig_{\eps,2\eps}^{n-i})(\Phi_0(e_j))\big) \\
&\eqrel[f]{$\stackrel{\eqref{eq:GammaFibres4}}=$} \sum_{i=0}^n \binom ni \tau(d_0^i)\tau\big((\Delta_\eps^ig_{\eps,2\eps}^{n-i})(\Phi_0(e_j))\big) \\
&\eqrel[f]{$\stackrel{\eqref{eq:GammaFibres2}}=$} \frac1k \sum_{i=0}^n \binom ni  \tau(d_0^i)\tau\big((\Delta_\eps^ig_{\eps,2\eps}^{n-i})(\Phi_0(1_k))\big) \label{eq:GammaFibres7} \\
&\eqrel[f]{$\stackrel{\eqref{eq:GammaFibres4}}{=}$} \frac1k \sum_{i=0}^n \binom ni \tau\big(d_0^i  (\Delta_\eps^ig_{\eps, 2\eps}^{n -i})(\Phi_0(1_k))\big) \\
&\eqrel[f]{$\stackrel{\eqref{eq:GammaFibres6}}{=}$}
\frac1k \tau(\Phi_0'(1_k)^n).
\end{split}
\end{equation}

Next, we define the positive contraction\footnote{To see that $h$ commutes with the range of $\Phi_0$, note that the structure theorem for order zero maps implies $\Phi_0(1_k)$ commutes with the range of $\Phi_0$.}
\begin{equation}
\label{eq:GammaFibres8}
h = d_1(1_{M^\infty}-g_{0,\eps}(\Phi_0(1_k))) \in \M^\infty \cap (\M_0 \cup \Phi_0(\C^k))'. \end{equation}
Since $d_0\perp d_1$ and $(1_{\M^\infty}-g_{0,\eps}(\Phi_0(1_k)))\perp g_{\eps,2\eps}(\Phi_0(1_k))$, we see that
\begin{equation}
\label{eq:GammaFibres9}
 h \perp d_0\Delta_\eps(\Phi_0(1_k))+g_{\eps,2\eps}(\Phi_0(1_k)) \stackrel{\eqref{eq:GammaFibres6}}{=}\Phi_0'(1_k).
\end{equation}
Using again that $\alpha_0$ satisfies Lemma~\ref{lem:GammaFibresAlpha}, there is a c.p.c.\ order zero map $\Phi_1\colon\mathbb{C}^k \rightarrow \M^\infty \cap (\M_0 \cup \{h\})'$ satisfying
\begin{align}
\label{eq:GammaFibres10}
	\tau(\Phi_1(1_k)) &\geq \alpha_0, \\
\label{eq:GammaFibres11}
	\tau(f(\Phi_1(e_j))) &= \frac{1}{k}\tau(f(\Phi_1(1_k))),
\shortintertext{and}
\label{eq:GammaFibres12}
	\tau(f(\Phi_1(x))b) &= \tau(f(\Phi_1(x)))\tau(b)
\end{align}
for all $\tau \in K^\infty,\ x \in \mathbb C^k,\ f\in C_0(0,1]$, and $b \in {\rm C}^*(h)$.
Now, define
\begin{equation}
\label{eq:GammaFibres13}
\Phi=\Phi_0'+h\Phi_1\colon\C^k \to \M^\infty \cap \M_0'.
\end{equation}
Since $h$ commutes with the range of $\Phi_1$, $h\Phi_1$ is c.p.c.\ order zero.  By \eqref{eq:GammaFibres9}, $h$ is orthogonal to $\Phi_0'(1_k)$, and using the structure theorem for order zero maps, $h$ is also orthogonal to the range of $\Phi_0'$.  So $\Phi$ is a sum of c.p.c.\ order zero maps with orthogonal ranges and hence is itself a c.p.c.\ order zero map.

We shall show that $\Phi$ satisfies both \eqref{eq:GammaFibresAlphaA} and \eqref{eq:GammaFibresAlphaB}. First, we show \eqref{eq:GammaFibresAlphaB}.
For $\tau \in K^\infty$, $j\in\{1,\dots,k\}$, and $n \in \N$, we have
\begin{equation}
\begin{split}
\tau(\Phi(e_j)^n)
&\eqrel[g]{$\stackrel{\eqref{eq:GammaFibres13}}=$}
\tau(\Phi_0'(e_j)^n+h^n\Phi_1(e_j)^n) \\
&\eqrel[g]{$\stackrel{\eqref{eq:GammaFibres12}}=$} \tau(\Phi_0'(e_j)^n)+\tau(h^n)\tau(\Phi_1(e_j)^n) \\
&\eqrel[g]{$\stackrel{\eqref{eq:GammaFibres7},\eqref{eq:GammaFibres11}}=$} \frac1k\tau(\Phi_0'(1_k)^n)+\frac1k\tau(h^n)\tau(\Phi_1(1_k)^n) \\
&\eqrel[g]{$\stackrel{\eqref{eq:GammaFibres12},\eqref{eq:GammaFibres13}}=$} \frac1k\tau(\Phi(1_k)^n).
\end{split}
\end{equation}
By linearity, continuity, and the Stone--Weierstrass theorem, \eqref{eq:GammaFibresAlphaB} follows.

We now work towards showing that $\Phi$ satisfies \eqref{eq:GammaFibresAlphaA}. 
Let $\tau \in K^\infty$.
By \eqref{eq:GammaFibres6}, \eqref{eq:GammaFibres13}, and Lemma~\ref{lem:oz-projection}, we have
\begin{equation}\label{eq:sam-1}
\tau(\Phi(1_k))
= \tau(d_0\Delta_\eps(\Phi_0(1_k)))+\tau(g_{\eps,2\eps}(\Phi_0(1_k)))+\tau(h\Phi_1(1_k)). \\
\end{equation}
We estimate the first term of \eqref{eq:sam-1} as follows:
\begin{equation}\label{eq:dag}
\begin{split}
\tau\big(d_0\Delta_\eps(\Phi_0(1_k))\big)
&\eqrel[j]{$\stackrel{\eqref{eq:GammaFibres4}}{=}$}
\tau(d_0) \tau\big(\Delta_\eps(\Phi_0(1_k))\big) \\
&\eqrel[j]{$\stackrel{\eqref{eq:GammaFibres5}}{\geq}$}
\gamma \tau\big(\Delta_\eps(\Phi_0(1_k))\big) \\
&\eqrel[j]{$\geq$}
\gamma\alpha_0 \tau\big(\Delta_\eps(\Phi_0(1_k))\big),
\end{split}
\end{equation}
where we have used that $\alpha_0 < 1$ in the last line.  We estimate the third term of \eqref{eq:sam-1} as follows:
\begin{equation}\label{eq:ddag}
\begin{split}
\tau(h\Phi_1(1_k))
&\eqrel[k]{$\stackrel{\eqref{eq:GammaFibres12}}{=}$}
\tau(h)\tau(\Phi_1(1_k)) \\
&\eqrel[k]{$\stackrel{\eqref{eq:GammaFibres10}}{\geq}$} \alpha_0 \tau(h) \\
&\eqrel[k]{$\stackrel{\eqref{eq:GammaFibres8}}{=}$}
\alpha_0 \tau\big(d_1(1_{\mathcal M^\infty} - g_{0, \eps}(\Phi_0(1_k)))\big) \\
&\eqrel[k]{$\stackrel{\eqref{eq:GammaFibres4}}{=}$}
\alpha_0 \tau(d_1)\tau\big(1_{\mathcal M^\infty} - g_{0, \eps}(\Phi_0(1_k))\big) \\
&\eqrel[k]{$\stackrel{\eqref{eq:GammaFibres5}}{\geq}$} \gamma\alpha_0 \tau\big(1_{\mathcal M^\infty} - g_{0, \eps}(\Phi_0(1_k))\big).
\end{split}
\end{equation}
Substituting the estimates \eqref{eq:dag} and \eqref{eq:ddag} into \eqref{eq:sam-1} and using that $\Delta_\eps = g_{0, \eps} - g_{\eps, 2\eps}$, we obtain
\begin{equation}\label{eq:sam-2}
\begin{split}
\tau(\Phi(1_k)) &\geq \gamma\alpha_0 \tau\big(\Delta_\eps(\Phi_0(1_k))\big) + \tau\big(g_{\eps,2\eps}(\Phi_0(1_k))\big) \\
&\qquad + \gamma\alpha_0 \tau\big(1_{\mathcal M^\infty} - g_{0, \eps}(\Phi_0(1_k))\big)\\
&= \gamma\alpha_0 \tau\big(g_{0, \eps}(\Phi_0(1_k))\big) - \gamma\alpha_0 \tau\big(g_{\eps, 2\eps}(\Phi_0(1_k))\big)  \\
&\qquad + \tau\big(g_{\eps,2\eps}(\Phi_0(1_k))\big) + \gamma\alpha_0 - \gamma\alpha_0\tau\big(g_{0, \eps}(\Phi_0(1_k))\big)\\
&= \gamma\alpha_0 + (1 - \gamma\alpha_0) \tau\big(g_{\eps,2\eps}(\Phi_0(1_k))\big).
\end{split}
\end{equation}
Finally, using that $g_{\eps, 2\eps}(t) \geq t - \eps$ for all $t \in [0, 1]$, we get
\begin{equation}
\begin{split}
\tau(\Phi(1_k))
&\eqrel[h]{$\geq$}
\gamma \alpha_0 +(1 - \gamma \alpha_0) \tau\big( \Phi_0(1_k) - \eps 1_{\mathcal M^\infty}\big) \\
&\eqrel[h]{$\stackrel{\eqref{eq:GammaFibres1}}{\geq}$}
\gamma \alpha_0 + (1 - \gamma \alpha_0)(\alpha_0 - \eps) \\ 
&\eqrel[h]{$\stackrel{\eqref{eq:GammaFibres-alpha}}{=}$} \alpha,
\end{split}
\end{equation}
which completes the proof that \eqref{eq:GammaFibresAlphaA} holds. 

By Lemma~\ref{lem:ReindexTracialfactorisation} with $T = {\rm C}^*(\Phi(\C^k)) \subseteq \M^\infty \cap \M_0'$, there exists a strictly increasing function $\rho \colon\N \to \N$ such that, after replacing $\Phi$ with $\psi_\rho\circ\Phi\colon\C^k \rightarrow \M^\infty \cap S'$, we have that \eqref{eq:GammaFibresAlphaC} holds for all $\tau \in K^\infty,\ j\in\{1,\dots,k\},\ f\in C_0(0,1]$, and $b\in S$.

From the definition of the reduced power $(\M^\infty, K^\infty, E^\infty)$, we have $E^\infty \circ \psi_\rho = \psi_\rho \circ E^\infty$. Hence, it follows that \eqref{eq:GammaFibresAlphaA} and \eqref{eq:GammaFibresAlphaB} continue to hold with $\psi_\rho\circ\Phi$ in place of $\Phi$.
Thus we have shown $\alpha \in \Omega$, which is our intended contradiction.
\end{proof}

Restricting to the case of tracial completions of C$^*$-algebras with a Bauer simplex of traces, we obtain Theorem~\ref{thm:GammaFibres}.
\begin{proof}[Proof of Theorem~\ref{thm:GammaFibres}]
    Let $A$ be a C$^*$-algebra with $\partial_e T(A)$ non-empty and compact. 
    If $A$ has uniform property $\Gamma$, then it is clear that $\pi_\tau(A)''$ has property $\Gamma$ for each $\tau \in \partial_e T(A)$.
    
    Suppose now that $K = \partial_e T(A)$ has finite covering dimension and $\pi_\tau(A)''$ has property $\Gamma$ for each $\tau \in K$.
    By \cite[Theorem 3]{Oz13}, the uniform tracial completion $\M=\completion{A}{T(A)}$ has the structure of a W$^*$-bundle over $K$ with fibres $\pi_\tau(A)''$ for $\tau \in K$. 
    By Theorem \ref{thm:GammaFibresBundles}, $\M$ has property $\Gamma$. Hence, by $\|\cdot\|_{2,\rm u}$-density, it follows that $A$ has uniform property~$\Gamma$ (see Section~\ref{subsec:property-gamma}).
\end{proof}

\begin{remark}\label{rem:McDuffFibres}
If $(\M,K,E)$ is a W$^*$-bundle with factorial fibres where $K$ has finite covering dimension and $\pi_\tau(\M)''$ is McDuff for every $\tau \in K$, then $\M$ is McDuff.  The proof follows as in the proof of Theorem~\ref{thm:GammaFibresBundles} except with the order zero maps $\mathbb C^k \rightarrow \mathcal M^\infty \cap S'$ replaced with order zero maps $M_k \rightarrow \mathcal M^\infty \cap \M'$ throughout the proof.  This is essentially the proof of \cite[Theorem~4.6]{TWW15}.
\end{remark}

\end{document}